\definecolor{blue(ncs)}{rgb}{0.0, 0.53, 0.74}
\newcommand{\subscript}[2]{$#1 _ #2$}
\numberwithin{equation}{section}
\newtheorem{theorem}{Theorem}[section]
\theoremstyle{plain}
\newtheorem{lemma}[theorem]{Lemma}
\theoremstyle{plain}
\newtheorem{proposition}[theorem]{Proposition}
\theoremstyle{plain}
\newtheorem{corollary}[theorem]{Corollary}
\newtheorem{theo}{Theorem}
\theoremstyle{definition}
\newtheorem{remark}[theorem]{Remark}
\newtheorem{example}[theorem]{Example}
\newcommand{\N}{{\mathbb N}}
\newcommand{\R}{{\mathbb R}}
\newcommand{\eps}{\varepsilon}
\newcommand{\beq}{\begin{equation}}
\newcommand{\eeq}{\end{equation}}
\renewcommand{\le}{\leqslant}
\renewcommand{\ge}{\geqslant}
\newcommand{\h}{H^{s}_0(\Omega)}
\newcommand{\fl}{(-\Delta)^s\,}
\newcommand{\ds}{{\rm d}_\Omega^s}
\newcommand{\leqnomode}{\tagsleft@true}
\newcommand{\reqnomode}{\tagsleft@false}
\newenvironment{enumroman}{\begin{enumerate}

}{\end{enumerate}}
\title[Positive solutions for the fractional Laplacian]{Existence and multiplicity of positive solutions for the fractional Laplacian under subcritical or critical growth}
\author[S.\ Frassu, A.\ Iannizzotto]{Silvia Frassu, Antonio Iannizzotto}
\address[S.\ Frassu, A.\ Iannizzotto]{Department of Mathematics and Computer Science
\newline\indent
University of Cagliari
\newline\indent
Via Ospedale 72, 09124 Cagliari, Italy}
\email{silvia.frassu@unica.it, antonio.iannizzotto@unica.it} 
\subjclass[2010]{35R11, 35S15, 35A15}
\keywords{Fractional Laplacian, Critical growth, Variational methods}
\begin{document}

\begin{abstract}
We study a Dirichlet type problem for an equation involving the fractional Laplacian and a reaction term subject to either subcritical or critical growth conditions, depending on a positive parameter. Applying a critical point result of Bonanno, we prove existence of one or two positive solutions as soon as the parameter lies under a (explicitly determined) threshold. As an application, we find two positive solutions for a fractional Brezis-Nirenberg problem.
\end{abstract}

\maketitle

\begin{center}
Version of \today\
\end{center}

\section{Introduction}\label{sec1}

\noindent
This paper is devoted to the following Dirichlet problem for a pseudo-differential equation of fractional order:
\beq\label{p}
\begin{cases}
\fl u = \lambda f(u)& \text{in $\Omega$} \\
u>0 & \text{in $\Omega$} \\
u=0 & \text{in $\Omega^c$.}
\end{cases}
\eeq
Here $s\in(0,1)$, $\Omega\subset\R^N$ ($N>2s$) is a bounded domain with $C^{1,1}$ boundary, and the leading operator is the fractional Laplacian defined for all $u \in \mathcal{S}(\R^N)$ by
\beq\label{fl}
\fl u(x)= 2 \, \mathrm{P.V.} \int_{\R^N}\frac{u(x)-u(y)}{|x-y|^{N+2s}}\,dy.
\eeq
The autonomous reaction $f \in C(\R)$ is assumed to be non-negative and dominated at infinity by a power of $u$, namely, for all $t\in\R$
\beq\label{gr}
0\le f(t)\le a_0(1+|t|^{q-1}) \ (a_0>0,\,q\le 2^*_s),
\eeq
where $2^*_s = 2N/(N-2s)$ denotes the critical exponent for the fractional Sobolev space $H^s(\R^N)$ (see \cite{DPV}). Finally, $\lambda>0$ is a parameter.
\vskip2pt
\noindent
Problem \eqref{p} admits a variational formulation by means of the energy functional
\[J_\lambda(u) = \frac{[u]_s^2}{2}-\lambda\int_\Omega F(u)\,dx,\]
where $[\,\cdot\,]_s$ denotes the Gagliardo seminorm and $F$ is the primitive of $f$, i.e., weak solutions of \eqref{p} coincide with critical points of $J_\lambda$ in a convenient subspace of $H^s(\R^N)$ (see Section \ref{sec2} below for details). We note that, for $\lambda=1$, problem \eqref{p} embraces the following Dirichlet problem with pure power nonlinearities:
\beq \label{pp}
\begin{cases}
\fl u = \mu u^{p-1} + u^{q-1} & \text{in $\Omega$} \\
u > 0 & \text{in $\Omega$} \\
u=0 & \text{in $\Omega^c$,}
\end{cases}
\eeq
with $1<p<q\le 2^*_s$ and $\mu>0$.
\vskip2pt
\noindent
For a general introduction to the fractional Laplacian we refer to \cite{BV,CS,CS1,DPV}. The study of \eqref{p} (or closely related problems) via variational methods started from the work of Servadei and Valdinoci \cite{SV,SV1}. Here we distinguish between the {\em subcritical} ($q<2^*_s$ in \eqref{gr}) and {\em critical} ($q=2^*_s$) cases. In the subcritical case, we mention for instance the contributions of \cite{BMS,CW,DI,F,F1,IP,MP,WS} and the monograph \cite{MRS}.
\vskip2pt
\noindent
In the critical case, the main difficulty lies in the fact that $J_\lambda$ does not satisfy the (usual in variational methods) Palais-Smale compactness condition. In particular, problem \eqref{pp} with $q=2^*_s$ represents a fractional counterpart of the famous Brezis-Nirenberg problem \cite{BN}. Again, the first result in this direction is due to Servadei and Valdinoci \cite{SV2} (see also \cite{AP,BCDS,MS}). Later, Barrios {\em et al.} \cite{BCSS} studied \eqref{pp} with $1<p<2<q=2^*_s$ (concave case) proving that, for $\mu>0$ small enough, such problem has at least two positive solutions $u_\mu<w_\mu$, employing both topological (sub-supersolutions) and variational methods, in a way that was first introduced in \cite{ABC} for elliptic PDE's.
\vskip2pt
\noindent
Our approach to problem \eqref{p} is purely variational, mainly based on a critical point theorem of Bonanno \cite{B} and some of its consequences, presented in \cite{BD,BD1,BDO}. The main feature of such method is a strategy to find a local minimizer of a $J_\lambda$-type functional, which only requires a {\em local} Palais-Smale condition. Our results are the following:
\begin{itemize}[leftmargin=1cm]
\item[$(a)$] In the subcritical case ($q<2^*_s$) we apply an abstract result of \cite{BD} and explicitly compute a real number $\lambda^*>0$ s.t. problem \eqref{p} admits at least two positive solutions $u_\lambda$, $v_\lambda$ for all $\lambda\in(0,\lambda^*)$.
\item[$(b)$] In the critical case ($q=2^*_s$) we first study a generalization of problem \eqref{pp}, explicitly determining a real number $\mu^*>0$ s.t.\ there exist at least one positive solution $u_\mu$ for all $\mu\in(0,\mu^*)$. Then, we focus on \eqref{pp} with $q=2^*_s$ and, applying the mountain pass theorem, we produce a second positive solution $w_\mu>u_\mu$ for all $\mu\in(0,\mu^*)$ (here we mainly follow \cite{BDO}).
\end{itemize}
\vskip2pt
\noindent
To our knowledge, this is the first application of the ideas of \cite{B} in the field of fractional Laplacian equations. A noteworthy difference with respect to the classical elliptic case is the following: in this approach, it is essential to explicitly compute $J_\lambda(\bar u)$ at some Sobolev-type function $\bar u:\Omega\to\R$, which is usually chosen in such a way to have a piecewise constant $|\nabla\bar u|$. In the fractional framework, functions may have no gradient at all, and the computation of the Gagliardo seminorm is often prohibitive, so $\bar u$ will be chosen as (a multiple of) the solution of a fractional torsion equation in a ball (see \eqref{T}). 
\vskip2pt
\noindent
We also remark that our main result in part $(b)$ is formally equivalent to the main result of \cite{BCSS}, but with two substantial differences: the first solution $u_\mu$ is found as a local minimizer of $J_\lambda$ (instead of being detected via sub-supersolutions, and {\em a posteriori} proved to be a minimizer), and moreover the interval $(0,\mu^*)$ is {\em explicitly} determined (though possibly not optimal).
\vskip2pt
\noindent
The paper has the following structure: in Section \ref{sec2} we collect the necessary preliminaries; in Section \ref{sec3} we develop part $(a)$ of our study; in Sections \ref{sec4} and \ref{sec5} we focus on part $(b)$.
\vskip4pt
\noindent
{\bf Notation:} Throughout the paper, for any $A\subset\R^N$ we shall set $A^c=\R^N\setminus A$. By $|A|$ we will denote either the $N$-dimensional Lebesgue measure or the $(N-1)$-dimensional Hausdorff measure of $A$, which will be clear from the context. For any two measurable functions $u$, $v$, $u=v$ in $A$ will stand for $u(x)=v(x)$ for a.e.\ $x\in A$ (and similar expressions). We will often write $t^\nu=|t|^{\nu-1}t$ for $t\in\R$, $\nu>1$. For any $t\in\R$ we set $t^\pm=\max\{\pm t,0\}$. By $B_r(x)$ we denote the open ball centered at $x\in\R^N$ of radius $r>0$. For all $\nu\in[1,\infty]$, $\|\cdot\|_\nu$ denotes the standard norm of $L^\nu(\Omega)$ (or $L^\nu(\R^N)$, which will be clear from the context). Every function $u$ defined in $\Omega$ will be identified with its $0$-extension to $\R^N$. Moreover, $C$ will denote a positive constant (whose value may change line by line).

\section{Preliminaries}\label{sec2}

\noindent
We begin by recalling some basic notions about fractional Sobolev spaces (for details we refer to \cite{DPV}). We define the Gagliardo seminorm by setting for all measurable $u:\R^N \to \R$
\[[u]_{s} = \Big[ \iint_{\R^N\times\R^N}\frac{(u(x)-u(y))^2}{|x-y|^{N+2s}}\,dx\,dy \Big]^{\frac{1}{2}}.\]
Accordingly, we define the space
\[H^{s}(\R^N) = \big\{u\in L^2(\R^N):\,[u]_{s}<\infty\big\}.\]
The embedding $H^s(\R^N)\hookrightarrow L^{2^*_s}(\R^N)$ is continuous, and the fractional Talenti constant is given by the following lemma (see \cite[Theorem 1.1]{CT} and \cite[Proposition 3.6]{DPV}):

\begin{lemma} \label{Opt}
We have
\[T(N,s) = \max_{u \in H^s(\R^N)\setminus \{0\}} \frac{\|u\|_{2_s^*}}{[u]_s} = \frac{s^{\frac{1}{2}} \Gamma(\frac{N-2s}{2})^{\frac{1}{2}} \Gamma(N)^{\frac{s}{N}}} {2^{\frac{1}{2}} \pi^{\frac{N+2s}{4}} \Gamma(1-s)^{\frac{1}{2}} \Gamma(\frac{N}{2})^{\frac{s}{N}}} >0,\]
the maximum being attained at the functions
\[u(x)= \frac{a}{(b+|x-x_0|^2)^{\frac{N-2s}{2}}} \quad (a,b>0,\,x_0\in \R^N).\]
\end{lemma}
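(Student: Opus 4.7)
The plan is to identify the assertion with the sharp fractional Sobolev inequality on $\R^N$ and reduce it, via the Fourier transform, to a problem with a well-developed conformal structure. First I would use the identity
\[[u]_s^2 \;=\; c(N,s)\int_{\R^N} |\xi|^{2s}|\widehat u(\xi)|^2\,d\xi \;=\; c(N,s)\,\|(-\Delta)^{s/2}u\|_2^2,\]
where the constant $c(N,s)$ comes from the Fourier symbol of the singular kernel $|x|^{-N-2s}$, so that the quotient $\|u\|_{2^*_s}/[u]_s$ reduces (up to this explicit constant) to the sharp Sobolev quotient for the operator $(-\Delta)^{s/2}$.

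Next, by Riesz rearrangement together with the P\'olya--Szeg\H{o} inequality for fractional energies, I would restrict the supremum to nonnegative, radially symmetric, nonincreasing functions. On this class the problem is conformally equivalent, via stereographic projection, to a sharp Sobolev-type inequality on the round sphere $S^N$ for the intertwining operator of conformal weight $2s$. That spherical inequality is equivalent, by duality, to the sharp Hardy--Littlewood--Sobolev inequality of Lieb, whose extremizers are known to be constants on $S^N$; their pull-backs to $\R^N$ are precisely the conformal factors $a(b+|x-x_0|^2)^{-(N-2s)/2}$ listed in the statement.

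Finally, to obtain the explicit value of $T(N,s)$, I would evaluate the quotient on one specific extremizer, say $U(x)=(1+|x|^2)^{-(N-2s)/2}$. Both $[U]_s^2$ (through the Fourier transform of $U$, expressible via modified Bessel functions) and $\|U\|_{2^*_s}^{2^*_s}$ (through a radial substitution yielding a Beta integral) reduce to products of Gamma values; the stated closed form then follows after an application of the Legendre duplication formula. The hardest and least mechanical step is the classification of extremizers: for general $s\in(0,1)$, avoiding either the Lieb/HLS detour or a fractional P\'olya--Szeg\H{o} principle appears genuinely difficult, which is why the result is simply imported here from \cite{CT} and \cite[Proposition 3.6]{DPV}.
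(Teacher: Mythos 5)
Your outline is a correct sketch of the standard proof of the sharp fractional Sobolev inequality (Fourier identity for the Gagliardo seminorm, reduction via rearrangement and stereographic projection to Lieb's sharp Hardy--Littlewood--Sobolev inequality, then an explicit Gamma-function computation on $U(x)=(1+|x|^2)^{-(N-2s)/2}$), and you rightly flag that the classification of extremizers is the genuinely hard step. The paper itself offers no proof at all --- it imports the statement verbatim from \cite[Theorem 1.1]{CT} and \cite[Proposition 3.6]{DPV}, which is exactly the route you describe --- so there is nothing further to compare; the only point requiring care in your plan is to track the normalization constant in the Fourier identity, since the paper's $[u]_s$ omits the usual factor $C(N,s)$, and this is precisely what produces the $\Gamma(1-s)^{1/2}$ and $s^{1/2}$ in the stated value of $T(N,s)$.
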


\noindent
Now we establish a variational formulation for \eqref{p}, following \cite{SV1} (see also \cite{IMS}). Set 
\[\h=\big\{u \in H^s(\R^N): u=0 \text{ in } \Omega^c\big\},\]
a Hilbert space under the inner product
\[\langle u,v\rangle = \iint_{\R^N\times\R^N}\frac{(u(x)-u(y))(v(x)-v(y))}{|x-y|^{N+2s}}\,dx\,dy\]
and the corresponding norm $\|u\|=[u]_s$ (see \cite[Lemma 7]{SV1}). The dual space of $\h$ is denoted $H^{-s}(\Omega)$. By Lemma \ref{Opt} and H\"{o}lder's inequality, for any $\nu\in [1, 2_s^*]$ the embedding $\h \hookrightarrow L^{\nu}(\Omega)$ is continuous and for all $u\in\h$ we have
\beq \label{Ho}
\|u\|_{\nu} \le T(N,s)|\Omega|^{\frac{2_s^*-\nu}{2_s^* \nu}}\|u\|.
\eeq
Further, the embedding is compact iff $\nu < 2_s^*$ (see \cite[Lemma 8]{SV1}). 
\vskip2pt
\noindent
In order to deal with problem \eqref{p} variationally, we assume the following hypotheses on the reaction $f$:

\begin{itemize}[leftmargin=1cm]
\item[${\bf H}_0$] $f \in C(\R)$, $F(t)=\int_0^t f(\tau)\,d\tau$, and
\begin{enumroman}
\item\label{h01} $f(t) \ge 0$ for all $t \in \R$;
\item\label{h02} $f(t) \le a_0(1+|t|^{2_s^*-1})$ for all $t \in \R$ ($a_0>0$).
\end{enumroman}
\end{itemize}

\noindent
We set for all $u \in \h, \; \lambda >0$
\[\Phi(u)= \frac{\|u\|^2}{2}, \quad \Psi(u)=\int_{\Omega} F(u)\,dx, \quad J_{\lambda}(u)=\Phi(u)- \lambda \Psi(u)\]
($\Psi$ is well defined by virtue of hypothesis ${\bf H}_0$ \ref{h01} \ref{h02}). Then $\Phi, \Psi, J_\lambda\in C^1(\h)$ with
\[\langle J_\lambda^{'}(u),\varphi\rangle = \langle u,\varphi\rangle-\lambda\int_\Omega f(u)\varphi\,dx\]
for all $u,\varphi\in\h$. We say that $u$ is a (weak) solution of problem \eqref{p} if $J'_\lambda(u)=0$ in $H^{-s}(\Omega)$, that is, for all $\varphi\in\h$ we have
\beq\label{ws}
\langle u,\varphi\rangle=\lambda\int_\Omega f(u)\varphi\,dx.
\eeq
The regularity theory for fractional Dirichlet problems was essentially developed in \cite{RS} (see also \cite{IMS,BCSS}). While smooth in $\Omega$, solutions are in general singular on $\partial\Omega$, so the best global regularity we can expect is weighted H\"older continuity, in the following sense. Set for all $x\in\overline\Omega$
\[{\rm d}_\Omega(x) = {\rm dist}(x,\Omega^c),\]
then define the spaces
\[C^0_s(\overline\Omega) = \Big\{u\in C^0(\overline\Omega):\,\frac{u}{\ds}\in C^0(\overline\Omega)\Big\}, \quad \|u\|_{0,s} = \Big\|\frac{u}{\ds}\Big\|_\infty,\]
and for any $\alpha\in(0,1)$
\[C^\alpha_s(\overline\Omega) = \Big\{u\in C^0(\overline\Omega):\,\frac{u}{\ds}\in C^\alpha(\overline\Omega)\Big\}, \quad \|u\|_{\alpha,s} = \|u\|_{0,s}+\sup_{x\neq y}\frac{|u(x)/\ds(x)-u(y)/\ds(y)|}{|x-y|^\alpha}.\]
The positive order cone of $C^0_s(\overline\Omega)$ has a nonempty interior given by
\[{\rm int}(C^0_s(\overline\Omega)_+) = \Big\{u\in C^0_s(\overline\Omega):\,\frac{u}{\ds}>0 \ \text{in $\overline\Omega$}\Big\}.\]
For the reader's convenience we recall from \cite[Theorems 2.3, 3.2 and Lemma 2.7]{IMS} the main properties of weak solutions:

\begin{proposition}\label{reg}
Let ${\bf H}_0$ hold, $u\in\h$ be a weak solution of \eqref{p}. Then:
\begin{enumroman}
\item\label{reg1} (a priori bound) $u\in L^\infty(\Omega)$;
\item\label{reg2} (regularity) $u\in C^\alpha_s(\overline\Omega)$ with $\alpha\in(0,s]$ depending only on $s$ and $\Omega$;
\item\label{reg3} (Hopf's lemma) if $u\neq 0$, then $u\in{\rm int}(C^0_s(\overline\Omega)_+)$.
\end{enumroman}
\end{proposition}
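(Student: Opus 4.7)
The plan is to establish the three assertions in order, each using the previous one, in the style developed in \cite{IMS,RS}.

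For \ref{reg1}, the main tool is a Moser/De Giorgi/Stampacchia-type truncation scheme applied to the weak formulation \eqref{ws}. Using ${\bf H}_0$ \ref{h02} one writes $\lambda f(u) \le a(x)(1+|u|)$, where $a(x) = \lambda a_0(1+|u(x)|^{2^*_s-2})$ lies in $L^{N/(2s)}(\Omega)$ thanks to $u \in L^{2^*_s}(\Omega)$, which is the content of the continuous embedding $\h \hookrightarrow L^{2^*_s}(\Omega)$. Testing with truncations $(u-k)^+|u|^{2\gamma}$ for suitable $\gamma > 0$ and applying \eqref{Ho} produces a reverse Hölder-type inequality for $u$. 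Because the reaction has critical growth ($q = 2^*_s$), a plain Moser iteration fails, so I would use the Brezis--Kato bootstrap: first absorb a small-measure piece of $a$ to upgrade $u$ from $L^{2^*_s}$ to $L^r$ for some $r > 2^*_s$; then iterate, the growth of the integrability exponent being multiplicative. Standard arguments (see \cite[Theorem 3.2]{IMS}) then give $u \in L^\infty(\Omega)$, with bound depending on $\|u\|$.

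For \ref{reg2}, once $u \in L^\infty(\Omega)$, continuity of $f$ shows that $g(x) := \lambda f(u(x))$ is bounded on $\Omega$. Thus $u$ is a weak solution of a \textit{linear} Dirichlet problem $\fl u = g$ with $g\in L^\infty(\Omega)$ and $u = 0$ in $\Omega^c$. At this point I would invoke the boundary regularity theorem of Ros-Oton--Serra \cite{RS}: for $C^{1,1}$ domains $\Omega$, solutions to such a problem satisfy $u/\ds \in C^\alpha(\overline\Omega)$ for some $\alpha \in (0,s]$ depending only on $s$ and $\Omega$. By definition of $C^\alpha_s(\overline\Omega)$ this gives exactly \ref{reg2}.

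For \ref{reg3}, recall from ${\bf H}_0$ \ref{h01} that $f \ge 0$, so $\fl u = \lambda f(u) \ge 0$. Testing \eqref{ws} with $\varphi = u^- \in \h$ yields the weak maximum principle $u \ge 0$ in $\Omega$. Assume now $u \not\equiv 0$. The strong maximum principle for the fractional Laplacian (applicable since, by \ref{reg2}, $u$ is continuous) forces $u > 0$ in $\Omega$. Finally I would apply the fractional Hopf boundary lemma of \cite[Lemma 2.7]{IMS}: for non-trivial $u \in C^0_s(\overline\Omega)$ with $u \ge 0$ and $\fl u \ge 0$, the ratio $u/\ds$ extends continuously up to $\partial\Omega$ with a strictly positive value. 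This places $u$ in $\mathrm{int}(C^0_s(\overline\Omega)_+)$.

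The principal obstacle is \ref{reg1} in the critical case $q = 2^*_s$, since the standard Moser scheme breaks down there; the Brezis--Kato trick (exploiting that the ``bad'' part of the coefficient has arbitrarily small $L^{N/(2s)}$-norm on sets of small measure) is the essential ingredient. Once $L^\infty$ is reached, \ref{reg2} is a black-box application of \cite{RS}, and \ref{reg3} is a combination of the sign of $f$ with the strong maximum principle and the fractional Hopf lemma.
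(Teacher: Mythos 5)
Your outline is sound, but note that the paper does not prove this proposition at all: it is stated as a recollection of known results, citing \cite[Theorems 2.3, 3.2 and Lemma 2.7]{IMS} (with the boundary regularity ultimately resting on \cite{RS}). Your three steps --- a Brezis--Kato type truncation/bootstrap to handle the critical growth $q=2^*_s$ and reach $L^\infty$, then the Ros-Oton--Serra weighted H\"older estimate for $\fl u=g$ with $g\in L^\infty$, then the weak and strong maximum principles combined with the fractional Hopf lemma --- are precisely the arguments behind those citations, so your reconstruction is faithful and correctly identifies the one genuinely delicate point (the failure of plain Moser iteration at the critical exponent).
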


\noindent
By Proposition \ref{reg} \ref{reg3} we see that, whenever $u\in\h\setminus\{0\}$ satisfies \eqref{ws}, then in particular $u>0$ in $\Omega$. Moreover, assuming further that $f$ is locally Lipschitz in $\R$, from \cite[Corollary 1.6]{RS} we deduce that $u\in C^\beta(\Omega)$ for any $\beta\in[1,1+2s)$, which along with Proposition \ref{reg} \ref{reg2} implies that for all $x\in\R^N$ the mapping
\[x\mapsto\frac{u(x)-u(y)}{|x-y|^{N+2s}}\]
lies in $L^1(\R^N)$. Then, testing \eqref{ws} with any $\varphi\in C^\infty_c(\Omega)$ and applying \eqref{fl}, we have
\[\int_\Omega\fl u\varphi\,dx = \langle u,\varphi\rangle = \int_\Omega f(u)\varphi\,dx,\]
i.e., $u$ solves \eqref{p} pointwisely.
\vskip2pt
\noindent
We also recall the following result, relating the local minimizers of the energy functional $J_\lambda$ in $\h$ and in $C^0_s(\overline\Omega)$, respectively (see \cite[Theorem 1.1]{IMS}, \cite[Proposition 2.5]{BCSS}, and \cite[Theorem 1.1]{IMS1} for a nonlinear extension), namely an analog for the fractional case of the main result of \cite{BN1}:

\begin{proposition}\label{svh}
Let ${\bf H}_0$ hold, $u\in\h$. Then, the following are equivalent:
\begin{enumroman}
\item\label{svh1} there exists $\rho>0$ s.t.\ $J_\lambda(u+v)\ge J_\lambda(u)$ for all $v\in\h\cap C^0_s(\overline\Omega)$, $\|v\|_{0,s}\le\rho$;
\item\label{svh2} there exists $\sigma>0$ s.t.\ $J_\lambda(u+v)\ge J_\lambda(u)$ for all $v\in\h$, $\|v\|\le\sigma$.
\end{enumroman}
\end{proposition}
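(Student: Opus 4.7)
My plan is to prove the equivalence as a fractional counterpart of \cite{BN1}, following the scheme of \cite{IMS}. The direction \ref{svh2} $\Rightarrow$ \ref{svh1} is the easier one: using the continuous embedding $\h\cap C^0_s(\overline\Omega)\hookrightarrow\h$ (which in smooth domains rests on the fact that the fractional torsion function, comparable to $\ds$, lies in $\h$, plus a control of the Gagliardo seminorm of $\ds h$ for uniformly continuous $h$), any $v\in\h\cap C^0_s(\overline\Omega)$ with $\|v\|_{0,s}$ sufficiently small satisfies $\|v\|\le\sigma$, so \ref{svh1} follows from \ref{svh2} with $\rho:=\sigma/C$.

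The reverse implication \ref{svh1} $\Rightarrow$ \ref{svh2} is the heart of the matter, and I intend to argue by contradiction. Assume \ref{svh1} holds while \ref{svh2} fails, so there is a sequence $\{v_n\}\subset\h$ with $\eps_n:=\|v_n\|\to 0^+$ and $J_\lambda(u+v_n)<J_\lambda(u)$. As a preliminary step I will verify that $u$ is itself a weak solution of \eqref{p}: for every $\varphi\in C^\infty_c(\Omega)\subset\h\cap C^0_s(\overline\Omega)$ the map $t\mapsto J_\lambda(u+t\varphi)$ admits a local minimum at $t=0$ (since $\|t\varphi\|_{0,s}\to 0$), yielding $\langle J_\lambda'(u),\varphi\rangle=0$; density of $C^\infty_c(\Omega)$ in $\h$ then gives $J_\lambda'(u)=0$, and Proposition \ref{reg} supplies $u\in L^\infty(\Omega)\cap C^0_s(\overline\Omega)$.

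For each $n$ I will then consider the constrained problem of minimizing $J_\lambda(u+\cdot)$ on $\overline{B}_{\eps_n}\subset\h$; since the critical growth of $F$ may obstruct weak lower semicontinuity, I will apply Ekeland's variational principle rather than direct minimization to produce an approximate minimizer $w_n\in\overline{B}_{\eps_n}$ with $J_\lambda(u+w_n)\le J_\lambda(u+v_n)<J_\lambda(u)$ and a KKT-type relation with Lagrange multiplier $\nu_n\ge 0$. Combined with the Euler--Lagrange equation $\fl u=\lambda f(u)$ for $u$, a short computation shows that $w_n$ solves, weakly and up to an $H^{-s}(\Omega)$-negligible error,
\[\fl w_n=\frac{\lambda}{1+\nu_n}\bigl(f(u+w_n)-f(u)\bigr)\quad\text{in }\Omega,\qquad w_n=0\quad\text{in }\Omega^c.\]

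The decisive step will be to upgrade $\|w_n\|\to 0$ into $\|w_n\|_{0,s}\to 0$. The right-hand side above is controlled, uniformly in $n$, by $a_0\bigl(2+|u|^{2^*_s-1}+|u+w_n|^{2^*_s-1}\bigr)$, and the factor $1/(1+\nu_n)\le 1$ works in our favour. A Moser-type bootstrap in the style of \cite[Theorem 3.2]{IMS} transfers the smallness of $\|w_n\|$ to smallness of $\|w_n\|_\infty$; then Proposition \ref{reg} \ref{reg2} furnishes a uniform $C^\alpha_s(\overline\Omega)$ bound for some $\alpha\in(0,s]$, and the compactness of the embedding $C^\alpha_s\hookrightarrow C^0_s$ together with $w_n\to 0$ a.e.\ forces $\|w_n\|_{0,s}\to 0$. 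For $n$ large this contradicts \ref{svh1}, completing the proof. The hardest step will be precisely this regularity bootstrap: converting energy-norm smallness into weighted-H\"older smallness under critical growth, with the Lagrange-multiplier term tamed by its favourable sign $\nu_n\ge 0$.
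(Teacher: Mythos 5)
The paper does not actually prove Proposition \ref{svh}; it is recalled from \cite[Theorem 1.1]{IMS} and \cite[Proposition 2.5]{BCSS}. Your outline of \ref{svh1}$\Rightarrow$\ref{svh2} follows essentially the route of those references (contradiction, constrained minimization on $\overline B_{\eps_n}$, multiplier of favourable sign, uniform $L^\infty$ and weighted H\"older estimates, Ascoli). The one wrinkle you add, Ekeland in place of exact minimization, leaves an $H^{-s}(\Omega)$ error in the equation for $w_n$, which the a priori bound of Proposition \ref{reg} \ref{reg1} is not stated to tolerate; this is avoidable, since for $\eps_n$ small the infimum on $\overline B_{\eps_n}$ is attained even under critical growth (along $v_k\rightharpoonup v$ one has $\Psi(u+v_k)\le\Psi(u+v)+C\|v_k-v\|_{2^*_s}^{2^*_s}+o(1)$, and the excess critical term is absorbed by $\tfrac12\|v_k-v\|^2$ once $\|v_k-v\|\le2\eps_n$ is small), but as written it is a gap you would need to close.

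The serious error is in \ref{svh2}$\Rightarrow$\ref{svh1}: there is \emph{no} continuous embedding of $(\h\cap C^0_s(\overline\Omega),\|\cdot\|_{0,s})$ into $(\h,\|\cdot\|)$. The norm $\|v\|_{0,s}=\|v/\ds\|_\infty$ gives no control on the Gagliardo seminorm: for fixed $\phi\in C^\infty_c(\Omega)\setminus\{0\}$ and $m>1/s$, the functions $v_k(x)=k^{-1}\phi(x)\sin(k^m x_1)$ belong to $\h\cap C^0_s(\overline\Omega)$ and satisfy $\|v_k\|_{0,s}\to0$ while $[v_k]_s\sim k^{ms-1}\to\infty$. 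The product estimate you invoke necessarily produces a term of the form $\|\ds\|_\infty\,[v/\ds]_s$, and neither the sup norm nor uniform continuity of $v/\ds$ bounds that seminorm. This is precisely where the fractional statement departs from the classical $H^1$ versus $C^1$ result of \cite{BN1}, where $\|\nabla v\|_\infty$ does control $\|\nabla v\|_2$: here \emph{both} implications require proof. The correct argument (cf.\ \cite{IMS}) again goes by contradiction: given $v_n$ with $\rho_n=\|v_n\|_{0,s}\to0$ and $J_\lambda(u+v_n)<J_\lambda(u)$, minimize $J_\lambda(u+\cdot)$ over the convex, weakly closed, $L^\infty$-bounded set $\{v\in\h:\,|v|\le\rho_n\ds\ \text{a.e.}\}$, on which $\Psi$ is sequentially weakly continuous by dominated convergence, so the direct method applies even for critical growth; the minimizers $w_n$ satisfy $J_\lambda(u+w_n)<J_\lambda(u)$, are bounded in $\h$ and tend to $0$ in $L^\infty$, hence $w_n\rightharpoonup0$, and then weak lower semicontinuity of $\Phi$ combined with $\limsup_nJ_\lambda(u+w_n)\le J_\lambda(u)$ forces $\|w_n\|\to0$, contradicting \ref{svh2}.
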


\noindent
As pointed out in the Introduction, we will make use of the following fractional torsion equation on a ball:
\beq\label{T}
\begin{cases}
\fl u_R = 1& \text{in $B_R(x_0)$} \\
u_R=0 & \text{in $B_R(x_0)^c$,}
\end{cases}
\eeq
where $x_0\in\R^N$, $R>0$. The solution of \eqref{T} (defined as in \eqref{ws}) is unique, given by
\[u_R (x) = A(N,s)(R^2 - |x-x_0|^2)_{+}^s, \ A(N,s) = \frac{s \Gamma(\frac{N}{2})} {2 \pi^{\frac{N}{2}} \Gamma(1+s) \Gamma(1-s)}\]
(see \cite[p.\ 33]{BV} or \cite[equation (1.4)]{RS}). This simple example is popular in fractional regularity theory, as it shows that solutions of Dirichlet problems may be singular at the boundary. For future use we compute some norms of $u_R$:

\begin{lemma}\label{tor}
For all $x_0\in\R^N$, $R>0$ we have
\begin{enumroman}
\item\label{tor1} $\displaystyle \|u_R\|_\nu = A(N,s)\Big[\frac{\pi^{\frac{N}{2}} \Gamma(1+\nu s) R^{N+2\nu s}} {\Gamma(\frac{N+2\nu s+2}{2})}\Big]^\frac{1}{\nu}$ for all $\nu\ge 1$;
\item\label{tor2} $\displaystyle[u_R] = \Big[\frac{s\Gamma(\frac{N}{2})R^{N+2s}}{2\Gamma(1-s)\Gamma(\frac{N+2s+2}{2})}\Big]^\frac{1}{2}$.
\end{enumroman}
\end{lemma}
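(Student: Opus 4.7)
The plan is to prove (i) by a direct integral computation in spherical coordinates, and then to derive (ii) from (i) via the variational identity that $u_R$ itself provides through the torsion equation, thereby avoiding the intractable double integral defining $[u_R]_s$.

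First, by translation invariance of the Lebesgue integral we may take $x_0=0$. For part (i), since $u_R(x)=A(N,s)(R^2-|x|^2)_+^s$ and $\nu s\ge 0$,
\[
\|u_R\|_\nu^\nu = A(N,s)^\nu\int_{B_R(0)}(R^2-|x|^2)^{\nu s}\,dx
= A(N,s)^\nu\,|S^{N-1}|\int_0^R(R^2-r^2)^{\nu s}r^{N-1}\,dr.
\]
The substitution $r=R\sqrt t$, $dr=\tfrac{R}{2\sqrt t}\,dt$, converts the radial integral into a Beta function:
\[
\int_0^R(R^2-r^2)^{\nu s}r^{N-1}\,dr
=\frac{R^{N+2\nu s}}{2}\int_0^1 t^{N/2-1}(1-t)^{\nu s}\,dt
=\frac{R^{N+2\nu s}}{2}\,\frac{\Gamma(\tfrac{N}{2})\Gamma(1+\nu s)}{\Gamma(\tfrac{N+2\nu s+2}{2})}.
\]
Inserting $|S^{N-1}|=2\pi^{N/2}/\Gamma(\tfrac{N}{2})$ cancels $\Gamma(\tfrac{N}{2})$ and yields the formula in (i) after taking the $\nu$-th root.

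For part (ii), the key idea is to avoid computing the Gagliardo double integral directly. Since $u_R\in H^s_0(B_R(0))$ is the (unique) weak solution of \eqref{T}, testing the weak formulation \eqref{ws} with $\varphi=u_R$ (with $f\equiv 1$, $\lambda=1$) gives
\[
[u_R]_s^2=\langle u_R,u_R\rangle=\int_{B_R(0)}u_R\,dx=\|u_R\|_1.
\]
Applying (i) with $\nu=1$ and substituting the explicit value of $A(N,s)$ given in the statement of the lemma, the factor $\pi^{N/2}\Gamma(1+s)$ cancels and one reads off
\[
[u_R]_s^2=\frac{s\,\Gamma(\tfrac{N}{2})\,R^{N+2s}}{2\,\Gamma(1-s)\,\Gamma(\tfrac{N+2s+2}{2})},
\]
which is (ii).

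There is no real obstacle: (i) is a standard Beta-integral computation, and the only point requiring care is the observation used in (ii), namely that the energy identity for the torsion solution reduces the Gagliardo seminorm to an $L^1$-norm already computed in (i). All the $\Gamma$-factor simplifications are routine once $A(N,s)$ is substituted.
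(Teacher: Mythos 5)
Your proof is correct and follows essentially the same route as the paper: part (i) is the same Beta-integral computation in spherical coordinates, and part (ii) is obtained, exactly as in the paper, by testing the weak formulation of the torsion equation with $u_R$ itself to reduce $[u_R]_s^2$ to $\|u_R\|_1$ and then invoking (i) with $\nu=1$. No gaps.
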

\begin{proof}
First we recall the well-known formulas
\[|\partial B_1(0)|= \frac{2\pi^{\frac{N}{2}}} {\Gamma(\frac{N}{2})}, \quad \int_0^1 (1 - \rho^2)^{\alpha} \rho^{N-1} \,d\rho = \frac{\Gamma(\frac{N}{2}) \Gamma(1+\alpha)} {2\Gamma(\frac{N+2 \alpha+2}{2})} \quad (\alpha >0),\]
then for all $\nu\ge 1$ we compute
\begin{align*}
\int_{B_R(x_0)} u^{\nu}_R(x)\,dx &= A(N,s)^\nu \int_{B_R(x_0)}(R^2 - |x-x_0|^2)^{\nu s} \,dx\\
&= A(N,s)^\nu R^{N+2\nu s} |\partial B_1(0)| \int_0^1 (1 - \rho^2)^{\nu s} \rho^{N-1} \,d\rho \\
&= A(N,s)^\nu \frac{\pi^{\frac{N}{2}} \Gamma(1+\nu s) R^{N+2\nu s}}{\Gamma(\frac{N+2\nu s+2}{2})},
\end{align*}
which implies \ref{tor1}. Further, testing \eqref{T} with $u_R\in H^s_0(B_R(x_0))$ and applying \ref{tor1} with $\nu=1$, we have
\begin{align*}
[u_R]_s^2 &= \int_{B_R(x_0)} u_R\,dx \\
&= A(N,s) \frac{\pi^\frac{N}{2}\Gamma(1+s)R^{N+2s}}{\Gamma(\frac{N+2s+2}{2})} \\
&= \frac{s\Gamma(\frac{N}{2})R^{N+2s}}{2\Gamma(1-s)\Gamma(\frac{N+2s+2}{2})},
\end{align*}
which gives \ref{tor2}.
\end{proof}

\begin{remark}
We note that some results here are affected by the definition \eqref{fl}, which is the same adopted in \cite{BCSS}. Other works on the subject, for instance \cite{BV,CT,DPV}, define the fractional Laplacian as
\[\fl u(x) = C(N,s)\,{\rm P.V.}\int_{\R^N}\frac{u(x)-u(y)}{|x-y|^{N+2s}}\,dy, \ C(N,s) = \frac{2^{2s} s \Gamma(\frac{N+2s}{2})}{\pi^{\frac{N}{2}} \Gamma(1-s)} >0,\]
where the multiplicative constant is required to equivalently define $\fl$ by means of the Fourier transform. In this paper, explicit constants are one of the main issues, so we decide to follow the standard of \cite{BCSS} in order to easily compare similar results.
\end{remark}

\section{Two positive solutions under subcritical growth}\label{sec3} 

\noindent
In this section, following \cite{BD} as a model, we study \eqref{p} under the following hypotheses:

\begin{itemize}[leftmargin=1cm]
\item[${\bf H}_1$] $f \in C(\R)$, $\displaystyle F(t)=\int_0^t f(\tau)\,d\tau$ satisfy
\begin{enumroman}
\item \label{h11} $f(t) \ge 0$ for all $t \in \R$;
\item \label{h12} $f(t) \le a_p |t|^{p-1} + a_q |t|^{q-1}$ for all $t \in \R$ ($1 \le p < 2 <q< 2_s^*$, $a_p, a_q >0$);
\item \label{h13} $\displaystyle\lim_{t \to 0^+} \frac{F(t)}{t^2} = \infty$
\item \label{h14} $0 < \rho F(t) \le f(t)t$ for all $t \ge M$ ($\rho >2$, $M>0$).
\end{enumroman}
\end{itemize}
Hypotheses ${\bf H}_1$ conjure for $f$ a subcritical, superlinear growth at infinity, as well as a sublinear growth near the origin, while \ref{h14} is an Ambrosetti-Rabinowitz condition.
\vskip2pt
\noindent
First, we recall the classical Palais-Smale condition at level $c\in\R$, for a functional $J\in C^1(X)$ on a Banach space $X$:
\begin{itemize}
\item[$(PS)_c$] Every sequence $(u_n)$ in $X$, s.t.\ $J(u_n)\to c$ and $J'(u_n)\to 0$ in $X^*$, has a convergent subsequence.
\end{itemize}
We say that $J$ satisfies $(PS)$, if $J$ satisfies $(PS)_c$ for any $c\in\R$.
\vskip2pt
\noindent
We will apply the following abstract result, slightly rephrased from \cite[Theorem 2.1]{BD}:

\begin{theo} \label{Ab}
Let $X$ be a Banach space, $\Phi, \Psi \in C^1(X)$, $J_{\lambda}=\Phi - \lambda \Psi \; (\lambda>0)$, $r \in \R, \; \bar{u} \in X$ satisfy
\begin{enumerate}[label=$(A_{\arabic*})$]
\item\label{t1} $\displaystyle \inf_{u \in X} \Phi(u) = \Phi(0)=\Psi(0)=0$;
\item\label{t2} $0<\Phi(\bar{u})<r$;
\item\label{t3} $\displaystyle \sup_{\Phi(u) \le r} \cfrac{\Psi(u)}{r} < \cfrac{\Psi(\bar{u})}{\Phi({\bar{u})}}$;
\item\label{t4} $\displaystyle \inf_{u \in X} J_{\lambda}(u) = - \infty$ for all $\displaystyle\lambda \in 
I_r=\Big(\frac{\Phi(\bar{u})}{\Psi(\bar{u})}, \Big[\sup_{\Phi(u) \le r} \frac{\Psi(u)}{r}\Big]^{-1}\Big)$.
\end{enumerate}
Then, for all $\lambda \in I_r$ for which $J_{\lambda}$ satisfies $(PS)$, there exist $u_{\lambda}, v_{\lambda} \in X$ s.t.\ 
\[J_{\lambda}'(u_{\lambda}) = J_{\lambda}'(v_{\lambda})=0, \quad J_{\lambda}(u_{\lambda}) < 0 < J_{\lambda}(v_{\lambda}).\]
\end{theo}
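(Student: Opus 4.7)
My plan is to apply Bonanno's strategy in a direct way: extract $u_\lambda$ as a constrained minimizer on the sublevel set $\{\Phi\le r\}$, then produce $v_\lambda$ through the mountain pass theorem, with $u_\lambda$ playing the role of the lower valley. Fix $\lambda\in I_r$ for which $J_\lambda$ satisfies $(PS)$. The two inequalities defining $I_r$ rewrite as
\[
\lambda\Psi(\bar u) > \Phi(\bar u), \qquad \lambda\sup_{\Phi(u)\le r}\Psi(u) < r,
\]
so that $J_\lambda(\bar u) < 0$, while for every $u$ with $\Phi(u)=r$,
\[
J_\lambda(u) = r - \lambda\Psi(u) \ge r - \lambda\sup_{\Phi(v)\le r}\Psi(v) =: \eta > 0.
\]
Together with $(A_1)$, this gives a genuine mountain-pass geometry adapted to the sublevel set $\overline S := \{u\in X : \Phi(u)\le r\}$.

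To produce $u_\lambda$, let $c := \inf_{\overline S} J_\lambda \le J_\lambda(\bar u) < 0$. Since $\overline S$ is closed in $X$ by continuity of $\Phi$, Ekeland's variational principle yields a sequence $(u_n)\subset\overline S$ with $J_\lambda(u_n)\to c$ and the usual minimization inequality on $\overline S$. Because $J_\lambda(u_n)\to c<0<\eta$, eventually $\Phi(u_n)<r$, so each such $u_n$ lies in the open set $\{\Phi<r\}\subset\overline S$; the Ekeland inequality then upgrades to $\|J_\lambda'(u_n)\|_{X^*}\to 0$. By $(PS)_c$, a subsequence converges to some $u_\lambda\in\overline S$, necessarily with $\Phi(u_\lambda)<r$ (by the barrier estimate) and $J_\lambda(u_\lambda)=c<0$. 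This produces the first critical point, and by construction it is a local minimizer of $J_\lambda$ on the open set $\{\Phi<r\}$.

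For $v_\lambda$, I would apply the mountain pass theorem with endpoints $u_\lambda$ and a suitable $e\in X$. By $(A_4)$, $\inf_X J_\lambda = -\infty$, so there exists $e$ with $J_\lambda(e) < J_\lambda(u_\lambda)$; the barrier estimate then forces $\Phi(e)>r$, because every $e$ with $\Phi(e)\le r$ satisfies $J_\lambda(e)\ge c = J_\lambda(u_\lambda)$. Setting
\[
c^* = \inf_{\gamma\in\Gamma}\max_{t\in[0,1]}J_\lambda(\gamma(t)), \quad \Gamma = \{\gamma\in C([0,1],X): \gamma(0)=u_\lambda,\ \gamma(1)=e\},
\]
continuity of $\Phi$ guarantees that every $\gamma\in\Gamma$ meets $\{\Phi=r\}$, whence $c^*\ge\eta>0$. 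The standard mountain pass theorem, combined with $(PS)_{c^*}$, then yields a critical point $v_\lambda$ at level $c^*>0$, completing the proof.

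The main obstacle, I expect, is the first step: upgrading the constrained minimizer on $\overline S$ to a genuine critical point of $J_\lambda$ on the full space $X$. This is precisely where the strict barrier inequality $J_\lambda\ge\eta>0$ on $\{\Phi=r\}$ plays the decisive role, since it forces the Ekeland sequence into the interior of $\overline S$, dropping the constraint and allowing local $(PS)_c$ compactness to conclude. Were $\lambda$ allowed to sit at the endpoint $\Phi(\bar u)/\Psi(\bar u)$, the minimizer could drift to $\{\Phi=r\}$ and fail to satisfy the Euler equation; the asymmetric role of the two bounds defining $I_r$ is reflected exactly here.
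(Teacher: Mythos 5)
Your argument is correct and reconstructs exactly the strategy behind the cited result: the paper imports Theorem \ref{Ab} from Bonanno--D'Agu\`{\i} \cite{BD} without proof, and their proof is precisely your combination of Ekeland's variational principle on the sublevel set $\{\Phi\le r\}$ (with the barrier $J_\lambda \ge r-\lambda\sup_{\Phi(u)\le r}\Psi(u)>0$ on $\{\Phi=r\}$ forcing the minimizing sequence into the open set $\{\Phi<r\}$, where the constraint disappears) followed by the mountain pass theorem with the local minimizer $u_\lambda$ as the low endpoint and a point $e$ with $J_\lambda(e)<J_\lambda(u_\lambda)$, necessarily outside $\{\Phi\le r\}$, as the other. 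The only step worth writing out is that $J_\lambda\ge-\lambda\sup_{\Phi(u)\le r}\Psi(u)>-r$ on all of $\{\Phi\le r\}$, so the constrained infimum $c$ is finite and Ekeland's principle is indeed applicable.
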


\noindent
Let $T(N,s) >0$ be defined by Lemma \ref{Opt}, set
\beq \label{L}
\lambda^* = \frac{1}{2 T(N,s)^2 |\Omega|^{\frac{2_s^*-2}{2_s^*}}} \Big(\frac{a_p}{p}\Big)^{\frac{2-q}{q-p}} \Big(\frac{a_q}{q}\Big)^{\frac{p-2}{q-p}} \Big(\frac{2-p}{q-2}\Big)^{\frac{2-p}{q-p}} \frac{q-2}{q-p} > 0.
\eeq
We have the following multiplicity result:
\begin{theorem} \label{Subc}
Let ${\bf H}_1$ hold, $\lambda^*>0$ be defined by \eqref{L}. Then, for all $\lambda \in (0, \lambda^*)$, \eqref{p} has at least two solutions 
$u_{\lambda}, v_{\lambda} \in \mathrm{int}(C_s^0(\overline{\Omega})_+)$. 
\end{theorem}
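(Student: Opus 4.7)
The plan is to apply Theorem \ref{Ab} to $X = \h$, $\Phi(u) = \|u\|^2/2$ and $\Psi(u) = \int_\Omega F(u)\,dx$, and then use a sign argument together with Proposition \ref{reg}~\ref{reg3} to upgrade the resulting critical points to elements of $\mathrm{int}(C^0_s(\overline\Omega)_+)$. Condition $(A_1)$ is immediate since $\Phi \ge 0$ and $\Phi(0) = F(0) = \Psi(0) = 0$. For $(A_4)$, the integrated Ambrosetti--Rabinowitz estimate $F(\tau) \ge c\tau^\rho - C$ for $\tau \ge 0$ with $\rho > 2$ (a consequence of ${\bf H}_1$~\ref{h14}) yields $J_\lambda(t\phi) \to -\infty$ as $t \to +\infty$ along any fixed $\phi \in \h \setminus \{0\}$ with $\phi \ge 0$.

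The technical heart of the proof is the verification of $(A_2)$ and $(A_3)$; this is also what pins down the explicit value of $\lambda^*$. From ${\bf H}_1$~\ref{h12} one gets $|F(t)| \le \frac{a_p}{p}|t|^p + \frac{a_q}{q}|t|^q$, and then \eqref{Ho} yields, for every $u \in \h$ with $\Phi(u) \le r$,
\[
\frac{\Psi(u)}{r} \le h(r) := A r^{p/2 - 1} + B r^{q/2 - 1},
\]
where $A = \frac{a_p}{p} T(N,s)^p |\Omega|^{(2^*_s - p)/2^*_s} 2^{p/2}$ and $B$ is defined analogously in terms of $q$. Since $p < 2 < q$, an elementary one-variable minimisation of $h$ over $r > 0$ is attained at $r^* = (A(2-p)/(B(q-2)))^{2/(q-p)}$, and after the (routine but delicate) cancellation of the resulting exponents of $T(N,s)$, $|\Omega|$ and $2$, one obtains exactly $h(r^*) = 1/\lambda^*$. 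I then fix $r = r^*$ and take $\bar u = t u_R$, where $u_R$ is the torsion function \eqref{T} on some ball $B_R(x_0) \subset \Omega$; Lemma~\ref{tor} gives $\Phi(\bar u)$ in closed form. Hypothesis ${\bf H}_1$~\ref{h13}, combined with $u_R > 0$ in $B_R(x_0)$, forces $\Psi(\bar u)/\Phi(\bar u) \to +\infty$ as $t \to 0^+$, while $\Phi(\bar u) \to 0$. Hence for any given $\lambda \in (0, \lambda^*)$, choosing $t$ sufficiently small (depending on $\lambda$) yields simultaneously $0 < \Phi(\bar u) < r^*$, $\Psi(\bar u)/\Phi(\bar u) > 1/\lambda^* \ge \sup_{\Phi(u) \le r^*} \Psi(u)/r^*$, and $\lambda \in I_{r^*} = (\Phi(\bar u)/\Psi(\bar u), \lambda^*)$.

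The Palais--Smale condition is standard: the combination $J_\lambda(u_n) - \frac{1}{\rho}\langle J'_\lambda(u_n), u_n\rangle$ together with ${\bf H}_1$~\ref{h14} bounds $\|u_n\|$ (after, if necessary, a preliminary truncation of $f$ on $(-\infty,0)$, which does not alter positive critical points), and the \emph{compact} embedding $\h \hookrightarrow L^q(\Omega)$ (valid since $q < 2^*_s$) together with the testing $\langle J'_\lambda(u_n), u_n - u\rangle \to 0$ upgrades weak to strong convergence. Theorem~\ref{Ab} then produces two critical points $u_\lambda, v_\lambda$ of $J_\lambda$ with $J_\lambda(u_\lambda) < 0 < J_\lambda(v_\lambda)$, both non-trivial; testing \eqref{ws} with $u^-$, using ${\bf H}_1$~\ref{h11} and the standard fractional sign inequality $\langle u, u^-\rangle \le -\|u^-\|^2$, forces $u^- = 0$, so $u_\lambda, v_\lambda \ge 0$, and Proposition~\ref{reg}~\ref{reg3} places both solutions in $\mathrm{int}(C^0_s(\overline\Omega)_+)$. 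The main obstacle I anticipate is the bookkeeping needed for the identity $h(r^*) = 1/\lambda^*$, in which the exponents on $T(N,s)$, $|\Omega|$, $2$, $a_p/p$, $a_q/q$ and $(2-p)/(q-2)$ must line up exactly as in \eqref{L}; the remaining ingredients are by now classical.
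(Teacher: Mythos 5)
Your proposal is correct and follows essentially the same route as the paper: Theorem \ref{Ab} applied with the same radius $r$ (the paper's \eqref{R0} is exactly your minimizer $r^*$ of $h$, and $h(r^*)=1/\lambda^*$ does check out), the same test function $\bar u$ built from the torsion function of Lemma \ref{tor}, and the same Ambrosetti--Rabinowitz/compact-embedding verification of $(PS)$. The only cosmetic differences are that the paper makes the ${\bf H}_1$(iii) step quantitative with explicit constants $K,\varepsilon,\delta$ where you argue by a limit as $t\to 0^+$, and it handles positivity via the truncation $f(t)=0$ for $t\le 0$ plus Proposition \ref{reg}(iii) rather than testing with $u^-$; both are fine.
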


\begin{proof}
Without loss of generality we may assume $f(t) =0$ for all $t \le 0$. We are going to apply Theorem \ref{Ab}. Set $X=\h$ and define $\Phi, \Psi, J_{\lambda}$ as in Section \ref{sec2}, then clearly $\Phi, \Psi \in C^1(\h)$ and
\[\inf_{u \in \h} \Phi(u) = \Phi(0)=\Psi(0)=0,\]
hence hypothesis \ref{t1} holds. Set
\beq \label{R0}
r= \frac{|\Omega|^{\frac{2}{2_s^*}}}{2 T(N,s)^2}  \Big[\frac{a_p q (2-p)}{a_q p (q-2)}\Big]^{\frac{2}{q-p}} >0.
\eeq
For all $u \in \h$, $\Phi(u) \le r$, we have $\|u\| \le (2r)^{\frac{1}{2}}$. So, by hypotheses ${\bf H}_1$ \ref{h11} \ref{h12}, along with \eqref{Ho}, \eqref{L} and \eqref{R0}, we obtain
\begin{align*}
\frac{\Psi(u)}{r} &\le \frac{a_p}{pr} \|u\|_p^p + \frac{a_q}{qr} \|u\|_q^q \\
&\le \frac{a_p}{pr} T(N,s)^p |\Omega|^{\frac{2_s^*-p}{2_s^*}} (2r)^{\frac{p}{2}} + \frac{a_q}{qr} T(N,s)^q |\Omega|^{\frac{2_s^*-q}{2_s^*}} (2r)^{\frac{q}{2}} \\
&= 2 T(N,s)^2  |\Omega|^{\frac{2_s^*-2}{2_s^*}} \Big(\frac{a_p}{p}\Big)^{\frac{q-2}{q-p}} \Big(\frac{a_q}{q}\Big)^{\frac{2-p}{q-p}} \Big(\frac{2-p}{q-2}\Big)^{\frac{p-2}{q-p}} \\
&+ 2 T(N,s)^2  |\Omega|^{\frac{2_s^*-2}{2_s^*}} \Big(\frac{a_p}{p}\Big)^{\frac{q-2}{q-p}}  \Big(\frac{a_q}{q}\Big)^{\frac{2-p}{q-p}} \Big(\frac{2-p}{q-2}\Big)^{\frac{q-2}{q-p}} \\
&= 2 T(N,s)^2  |\Omega|^{\frac{2_s^*-2}{2_s^*}} \Big(\frac{a_p}{p}\Big)^{\frac{q-2}{q-p}}  \Big(\frac{a_q}{q}\Big)^{\frac{2-p}{q-p}}  
\Big(\frac{2-p}{q-2}\Big)^{\frac{p-2}{q-p}}{\frac{q-p}{q-2}} = \frac{1}{\lambda^*}.
\end{align*}
Summarizing,
\beq \label{Sup}
\sup_{\Phi(u) \le r} \frac{\Psi(u)}{r} \le \frac{1}{\lambda^*}.
\eeq
Now fix $\lambda \in (0, \lambda^*)$. Since $\partial \Omega$ is $C^{1,1}$, we can find $x_0 \in \R^N$, $R>0$ largest s.t.\ $B_R(x_0) \subseteq \Omega$.
Let $K>0$ be s.t.\ 
\beq \label{K}
K\frac{s \Gamma(\frac{N}{2}) \Gamma(1+2s) \Gamma(\frac{N+2s+2}{2}) R^{2s}}{\pi^{\frac{N}{2}} \Gamma(1+s)^2 \Gamma(1-s) \Gamma(\frac{N+4s+2}{2})} 
> \frac{1}{\lambda}.
\eeq
By ${\bf H}_1$ \ref{h13}, we can find $\varepsilon >0$ s.t.\ for all $t \in [0,\varepsilon]$
\beq \label{Ft}
F(t) \ge K t^2.
\eeq
Finally, fix
\beq \label{D}
0 < \delta < \min \Big\{\Big[\frac{4 \Gamma(1-s) \Gamma(\frac{N+2s+2}{2}) r} {s \Gamma(\frac{N}{2}) R^{N+2s}}\Big]^{\frac{1}{2}}, 
\frac{2 \pi^{\frac{N}{2}} \Gamma(1+s) \Gamma(1-s) \varepsilon}{s \Gamma(\frac{N}{2})R^{2s}}\Big\}.
\eeq
Now let $u_R$ be the solution of \eqref{T} in $B_R(x_0)$, and set $\bar{u}=\delta u_R \in \h$. Then we have by Lemma \ref{tor} \ref{tor2} and \eqref{D}
\[
\Phi(\bar{u})= \frac{s \Gamma(\frac{N}{2}) R^{N+2s} \delta^2}{4 \Gamma(1-s) \Gamma(\frac{N+2s+2}{2})} < r,
\]
which implies \ref{t2}. Besides, by \eqref{D} we have for all $x \in \Omega$
\[
0 \le \bar{u}(x) \le \frac{s \Gamma(\frac{N}{2}) R^{2s} \delta}{2 \pi^{\frac{N}{2}} \Gamma(1+s) \Gamma(1-s)} < \varepsilon,
\]
hence by \eqref{Ft} and Lemma \ref{tor} \ref{tor1}
\[
\Psi(\bar{u}) \ge \int_{\Omega} K \bar{u}^2\,dx = K \delta^2 \|u_R\|_2^2 = K\frac{s^2 \Gamma(\frac{N}{2})^2 \Gamma(1+2s) R^{N+4s}}{4 \pi^{\frac{N}{2}} \Gamma(1+s)^2 \Gamma(1-s)^2 \Gamma(\frac{N+4s+2}{2})}\delta^2.
\]
The relations above and \eqref{K} imply
\[
\frac{\Psi(\bar{u})} {\Phi(\bar{u})} \ge K\frac{s \Gamma(\frac{N}{2}) \Gamma(1+2s)\Gamma(\frac{N+2s+2}{2})R^{2s}} {\pi^{\frac{N}{2}} \Gamma(1+s)^2 \Gamma(1-s) \Gamma(\frac{N+4s+2}{2})}
> \frac{1}{\lambda}.
\]
Recalling that $\lambda<\lambda^*$, by \eqref{Sup} we have
\[
\sup_{\Phi(u) \le r} \frac{\Psi(u)}{r} < \frac{1}{\lambda} < \frac{\Psi(\bar{u})}{\Phi(\bar{u})},
\]
which yields at once \ref{t3} and $\lambda \in I_r$. By ${\bf H}_1$ \ref{h14} we can find $C>0$ s.t.\ for all $t \ge M$
\beq \label{F}
F(t) \ge C t^{\rho}.
\eeq
Now pick $w \in C^\infty_c(\Omega)\setminus \{0\}$. By \eqref{F}, and recalling that $F(t)\ge 0$ for all $t\in\R$, we have for all $\tau >0$
\begin{align*}
J_{\lambda}(\tau w) &\le \frac{\|w\|^2}{2} \tau^2 - \lambda \int_{\{w \le M/\tau\}} F(\tau w)\,dx - \lambda \int_{\{w > M/\tau\}} C(\tau w)^{\rho}\,dx\\
&\le \frac{\|w\|^2}{2} \tau^2 - \lambda \int_{\Omega} C (\tau w)^{\rho}\,dx + \lambda \int_{\{w \le M/\tau\}} C M^{\rho}\,dx\\
&\le \frac{\|w\|^2}{2}  \tau^2 - \lambda C \|w\|_{\infty}^{\rho} |\Omega| \tau^{\rho} + \lambda CM^{\rho} |\Omega|
\end{align*}
and the latter tends to $-\infty$ as $\tau \to \infty$ (since $\rho >2$). So we see that \ref{t4} holds as well.
\vskip2pt
\noindent
Finally, we prove that $J_{\lambda}$ satisfies $(PS)$. Let $(u_n)$ be a sequence in $\h$ s.t.\ 
$|J_{\lambda}(u_n)| \le C$, $J_{\lambda}'(u_n) \to 0$ in $H^{-s}(\Omega)$. Then, for all $n \in \N$ we have
\beq \label{B}
\frac{\|u_n\|^2}{2}  - \lambda \int_{\Omega} F(u_n)\,dx \le C
\eeq
and for all $\varphi \in \h$
\beq \label{Lim}
\Big|\langle u_n, \varphi \rangle - \lambda \int_{\Omega} f(u_n)\varphi\,dx\Big| \le \|J_{\lambda}'(u_n)\| \, \|\varphi\|
\eeq
Multiplying \eqref{B} by $\rho>2$ (as in ${\bf H}_1$ \ref{h14}), testing \eqref{Lim} with $u_n$, and subtracting,
\begin{align*}
\frac{\rho-2}{2} \|u_n\|^2 &\le \lambda \int_{\Omega} \big(\rho F(u_n)-f(u_n)u_n\big)\,dx + \|J_{\lambda}'(u_n)\| \, \|u_n\| + C \\
&\le \lambda \int_{\{0 \le u_n \le M\}} C \big(|u_n|^p + |u_n|^q\big)\,dx + \|J_{\lambda}'(u_n)\| \, \|u_n\| + C\\
& \le \lambda C (M^p + M^q) |\Omega| + \|J_{\lambda}'(u_n)\| \, \|u_n\| + C.
\end{align*}
So $(u_n)$ is bounded in $\h$. Passing to a subsequence, we have $u_n \rightharpoonup u$ in $\h$, $u_n \to u$ in $L^p(\Omega), L^q(\Omega)$, and $u_n(x) \to u(x)$ for a.e. $x \in \Omega$. Testing \eqref{Lim} this time with $u_n-u \in \h$, we have for all $n \in \N$
\begin{align*}
\|u_n-u\|^2 &\le \langle u, u_n-u\rangle + \lambda \int_{\Omega} \big(a_p |u_n|^{p-1} + a_q |u_n|^{q-1}\big)|u_n-u|\,dx + \|J_{\lambda}'(u_n)\| \, \|u_n-u\|\\
&\le \langle u, u_n-u\rangle + \lambda\big(a_p \|u_n\|_p^{p-1} \|u_n-u\|_p + a_q \|u_n\|_q^{q-1} \|u_n-u\|_q\big) + \|J_{\lambda}'(u_n)\| \, \|u_n-u\|,
\end{align*}
(where we used ${\bf H}_1$ \ref{h12} and H\"{o}lder's inequality), and the latter tends to $0$ as $n\to \infty$. So, $u_n \to u$ in $\h$. (Note that we actually proved that $J_\lambda$ is unbounded from below and satisfies $(PS)$ for {\em all} $\lambda>0$.)
\vskip2pt
\noindent
By Theorem \ref{Ab}, there exist $u_{\lambda}, v_{\lambda} \in \h$ s.t.\
\[
J_{\lambda}'(u_{\lambda})= J_{\lambda}'(v_{\lambda})= 0, \quad J_{\lambda}(u_{\lambda})< 0 < J_{\lambda}(v_{\lambda}).
\]
Therefore, $u_{\lambda}, v_{\lambda} \not\equiv 0$ solve \eqref{p}. By ${\bf H}_0$ \ref{h11} and Proposition \ref{reg}, finally, we have $u_{\lambda}, v_{\lambda} \in \mathrm{int}(C_s^0(\overline{\Omega})_+)$.
\end{proof}

\noindent
We focus now on problem \eqref{pp}, with $1<p<2<q<2_s^*$ (subcritical case) and $\mu>0$. Set
\beq \label{Mu}
\mu^*= \Big[2 T(N,s)^2 |\Omega|^{\frac{2_s^*-2}{2_s^*}}\Big]^{\frac{p-q}{q-2}} p \, q^{\frac{2-p}{q-2}} \Big(\frac{2-p}{q-2}\Big)^{\frac{2-p}{q-2}} 
\Big(\frac{q-2}{q-p}\Big)^{\frac{q-p}{q-2}} >0.
\eeq
We have the following multiplicity result:

\begin{corollary} \label{Pt}
Let $1<p<2<q<2_s^*$, $\mu^*>0$ be defined by \eqref{Mu}. Then, for all $\mu \in (0, \mu^*)$ \eqref{pp} has at least two solutions 
$u_{\mu}, v_{\mu} \in \mathrm{int}(C_s^0(\overline{\Omega})_+)$.
\end{corollary}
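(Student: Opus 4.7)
The plan is to derive this corollary directly from Theorem \ref{Subc} by fitting \eqref{pp} into the framework of \eqref{p}. I would take $\lambda=1$ and define $f(t)=\mu(t^+)^{p-1}+(t^+)^{q-1}$, so that a positive solution of \eqref{p} with this reaction is exactly a solution of \eqref{pp}; since Theorem \ref{Subc} already produces solutions in $\mathrm{int}(C^0_s(\overline\Omega)_+)$, no separate positivity argument is needed.

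Next I would verify that ${\bf H}_1$ holds with $a_p=\mu$ and $a_q=1$: \ref{h11} and \ref{h12} are built in by construction; \ref{h13} follows from $F(t)\ge \mu t^p/p$ near $0^+$ together with $p<2$; and \ref{h14} holds for any fixed $\rho\in(2,q)$ and sufficiently large $t$, since
\[
f(t)t-\rho F(t)=\mu\Bigl(1-\frac{\rho}{p}\Bigr)t^p+\Bigl(1-\frac{\rho}{q}\Bigr)t^q,
\]
and the $t^q$ term has a positive coefficient ($\rho<q$) which dominates as $t\to\infty$ because $q>p$.

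The core of the argument is the algebraic equivalence
\[
\mu<\mu^* \quad \Longleftrightarrow \quad 1<\lambda^*(\mu),
\]
where $\lambda^*(\mu)$ denotes the threshold \eqref{L} evaluated at $a_p=\mu$, $a_q=1$. Substituting these values, rewriting $(2-q)/(q-p)=-(q-2)/(q-p)$ and $(p-2)/(q-p)=-(2-p)/(q-p)$, isolating the $\mu$-dependent factor, and then raising both sides of $\lambda^*(\mu)>1$ to the positive power $(q-p)/(q-2)$, the inequality reduces precisely to the bound defining $\mu^*$ in \eqref{Mu}. This is a routine, if slightly delicate, exponent manipulation.

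Once the equivalence is established, Theorem \ref{Subc} applied at $\lambda=1\in(0,\lambda^*(\mu))$ produces two critical points $u_\mu,v_\mu\in\mathrm{int}(C^0_s(\overline\Omega)_+)$ of $J_1$, which are the desired positive solutions of \eqref{pp}. The main (and essentially only) obstacle is carrying out the exponent bookkeeping cleanly; no new analytical ingredient beyond Theorem \ref{Subc} is needed.
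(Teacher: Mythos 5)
Your proposal is correct and follows essentially the same route as the paper: define $f(t)=\mu(t^+)^{p-1}+(t^+)^{q-1}$, verify ${\bf H}_1$ with $a_p=\mu$, $a_q=1$ and $\rho\in(2,q)$, observe that $\mu<\mu^*$ from \eqref{Mu} is exactly the condition $\lambda^*(\mu)>1$ for the threshold \eqref{L}, and apply Theorem \ref{Subc} with $\lambda=1$. The exponent bookkeeping you outline checks out against \eqref{Mu}, so nothing is missing.
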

\begin{proof}
Set for all $t \in \R$, $\mu \in (0,\mu^*)$
\[f(t)=\mu (t^+)^{p-1} + (t^+)^{q-1}.\]
Then $f$ satisfies ${\bf H}_1$ with $a_p=\mu$, $a_q=1$, and any $\rho\in(2,q)$. In view of \eqref{Mu}, here \eqref{L} rephrases as
\[
\lambda^* = \frac{1}{2 T(N,s)^2 |\Omega|^{\frac{2_s^*-2}{2_s^*}}} p^{\frac{q-2}{q-p}} q^{\frac{2-p}{q-p}} \Big(\frac{2-p}{q-2}\Big)^{\frac{2-p}{q-p}} 
\frac{q-2}{q-p} \,  \mu^{\frac{2-q}{q-p}} > 1.
\]
Hence we can apply Theorem \ref{Subc} with $\lambda=1$ and find $u_{\mu}, v_{\mu} \in \mathrm{int}(C_s^0(\overline{\Omega})_+)$ solutions to \eqref{pp}.
\end{proof}

\noindent
We present an example for Corollary \ref{Pt}:

\begin{example}
Set $s=\frac{1}{2}, \, p=\frac{3}{2}, \, q=3, \, N=2$ and
\[ \Omega = \Big\{(x,y) \in \R^2: \frac{x^2}{4} + \frac{y^2}{9} \le 1\Big\}. \]
Then we have $2_{1/2}^*=4 >3, \, |\Omega| = 6 \pi,$ while Lemma \ref{Opt} gives
\[T\Big(2, \frac{1}{2}\Big)= \frac{(\frac{1}{2})^{\frac{1}{2}} \Gamma(\frac{1}{2})^{\frac{1}{2}} \Gamma(2)^{\frac{1}{4}}} 
{2^{\frac{1}{2}} \pi^{\frac{3}{4}} \Gamma(\frac{1}{2})^{\frac{1}{2}} \Gamma(1)^{\frac{1}{4}}} = \frac{1}{2 \pi^{\frac{3}{4}}}. 
\]
Therefore, \eqref{Mu} becomes
\[
\mu^*= \Big[2 \Big(\frac{1}{2 \pi^{\frac{3}{4}}}\Big)^2 (6 \pi)^{\frac{1}{2}}\Big]^{-\frac{3}{2}} \frac{3}{2} \, 3^{\frac{1}{2}} \Big(\frac{1}{2}\Big)^{\frac{1}{2}}
\Big(\frac{2}{3}\Big)^{\frac{3}{2}} = \frac{2^{\frac{3}{4}} \pi^{\frac{3}{2}}}{3^{\frac{3}{4}}} .
\]
By Corollary \ref{Pt}, for all $\mu \in (0,\mu^*)$ \eqref{pp} has at least two positive solutions.
\end{example}

\section{One positive solution under critical growth}\label{sec4} 

\noindent
In this section, we study the following slight generalization of problem \eqref{pp}:
\beq\label{Cr}
\begin{cases}
\fl u = \mu g(u) + u^{2_s^*-1} & \text{in $\Omega$} \\
u > 0 & \text{in $\Omega$} \\
u=0 & \text{in $\Omega^c$,}
\end{cases}
\eeq
with $\mu>0$ and assuming the following hypotheses on $g$:
\begin{itemize}[leftmargin=1cm]
\item[${\bf H}_2$] $g \in C(\R)$, $\displaystyle G(t)=\int_0^t g(\tau)\,d\tau$ satisfy
\begin{enumroman}
\item\label{h21} $g(t) \ge 0$ for all $t \in \R$;
\item\label{h22} $g(t) \le a_p |t|^{p-1}$ for all $t \in \R$ ($p \in (1, 2_s^*)$, $a_p >0$);
\item\label{h23} $\displaystyle\lim_{t \to 0^+} \frac{G(t)}{t^2} = \infty$.
\end{enumroman}
\end{itemize}
Note that, due to hypothesis ${\bf H}_1$ \ref{h23}, problem \eqref{Cr} reduces to \eqref{pp} with $g(t)=t^{p-1}$ only for $p\in(1,2)$ (concave case). Although, the results of this section also embrace the case $p\in[2,2^*_s)$ (linear/convex case).
\vskip2pt
\noindent
Due to the presence of the critical term $u^{2^*_s-1}$ in \eqref{Cr}, we cannot apply Theorem \ref{Ab}, as the associated energy functional does not satisfy $(PS)$ in general. So we introduce the following local Palais-Smale condition for functionals of the type $J_\lambda=\Phi-\lambda\Psi$, with $\Phi,\Psi\in C^1(X)$, $\lambda>0$, defined on a Banach space $X$, and $r>0$:
\begin{itemize}
\item[$(PS)^r$] Every sequence $(u_n)$ in $X$, s.t.\ $(J_\lambda(u_n))$ is bounded in $\R$, $J'(u_n)\to 0$ in $X^*$, and $\Phi(u_n)\le r$ for all $n\in\N$, has a convergent subsequence.
\end{itemize}
In this case, our main tool is the following local minimum result, slightly rephrased from \cite[Theorem 3.3]{BDO}:

\begin{theo}\label{Ab1}
Let $X$ be a Banach space, $\Phi, \Psi \in C^1(X)$, $J_{\lambda}=\Phi - \lambda \Psi$ ($\lambda >0$), $r \in \R$; $\bar{u} \in X$ satisfy
\begin{enumerate}[label=(\subscript{B}{{\arabic*}})]
\item \label{b1} $\displaystyle \inf_{u \in X} \Phi(u) = \Phi(0)=\Psi(0)=0$;
\item \label{b2} $0<\Phi(\bar{u})<r$;
\item \label{b3} $\displaystyle \sup_{\Phi(u) \le r} \cfrac{\Psi(u)}{r} < \cfrac{\Psi(\bar{u})}{\Phi({\bar{u})}}$.
\end{enumerate}
Let
\[I_r=\Big(\frac{\Phi(\bar{u})}{\Psi(\bar{u})}, \Big[\displaystyle \sup_{\Phi(u) \le r} \frac{\Psi(u)}{r}\Big]^{-1}\Big).\]
Then, for all $\lambda \in I_r$ for which $J_{\lambda}$ satisfies $(PS)^r$, there exists $u_{\lambda} \in X$ s.t.\
\[0<\Phi(u_{\lambda})< r, \quad J_{\lambda}(u_{\lambda})=\min_{0<\Phi(u)<r} J_{\lambda}(u).\]
\end{theo}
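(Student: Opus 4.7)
The plan is to locate the minimizer on the closed sublevel set $M := \{u \in X : \Phi(u) \le r\}$, combining Ekeland's variational principle with the local compactness condition $(PS)^r$. Since $\Phi$ is continuous, $M$ is closed (hence a complete metric space in the induced topology), and I expect the strict inequalities in $(B_2)$--$(B_3)$ to prevent the infimum from being attained either at the origin or on the boundary $\{\Phi = r\}$, placing the resulting critical point in the open region $\{0 < \Phi < r\}$ as required by the conclusion.

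The first step is to record two elementary consequences of $(B_3)$. Writing $S := \sup_{\Phi(u) \le r} \Psi(u)/r$, which is finite and positive by $(B_3)$, for any $u \in M$ one has $\Psi(u) \le Sr$, so $J_\lambda(u) \ge -\lambda S r$, hence $c := \inf_M J_\lambda \in \R$. On the level set $\{\Phi(u) = r\}$ one obtains $J_\lambda(u) \ge r(1 - \lambda S) > 0$, since $\lambda < 1/S$ follows from $\lambda \in I_r$. In the opposite direction, $(B_2)$ together with $\lambda > \Phi(\bar u)/\Psi(\bar u)$ yields $J_\lambda(\bar u) < 0$, hence $c \le J_\lambda(\bar u) < 0$. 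In particular, $c$ is strictly less than both $J_\lambda(0) = 0$ and the infimum of $J_\lambda$ on $\{\Phi = r\}$.

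Next I would apply Ekeland's variational principle to $J_\lambda$ on $M$, obtaining a sequence $(u_n) \subset M$ with $J_\lambda(u_n) \to c$ and
\[J_\lambda(u_n) \le J_\lambda(v) + \tfrac{1}{n}\|u_n - v\| \quad \text{for all } v \in M.\]
Since $J_\lambda(u_n) \to c < 0$ while $J_\lambda \ge r(1 - \lambda S) > 0$ on $\{\Phi = r\}$, for all large $n$ the point $u_n$ must lie in the interior $\{\Phi < r\}$, so small perturbations $u_n + tw$ (with $t > 0$ small and $w \in X$ arbitrary) remain in $M$. Feeding these into the Ekeland inequality and letting $t \to 0^+$ yields $\|J_\lambda'(u_n)\|_{X^*} \le 1/n$. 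The sequence $(u_n)$ therefore satisfies the hypotheses of $(PS)^r$ (bounded energy, $J_\lambda'(u_n) \to 0$, and $\Phi(u_n) \le r$), so a subsequence converges: $u_n \to u_\lambda$ in $X$. Continuity of $\Phi$ and $J_\lambda$ gives $\Phi(u_\lambda) \le r$ and $J_\lambda(u_\lambda) = c$, whereas the strict inequalities $c < J_\lambda(0)$ and $c < \inf_{\{\Phi = r\}} J_\lambda$ force $0 < \Phi(u_\lambda) < r$, completing the proof.

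The main delicacy, in my view, lies in the ordering of the steps rather than in any single estimate: Ekeland on $M$ only provides a perturbation inequality, and this translates into a genuine norm bound on $J_\lambda'(u_n)$ only after one has first used the strict sign $c < 0$ to push $u_n$ off the boundary $\{\Phi = r\}$. Once this separation is secured, $(PS)^r$ supplies the compactness essentially for free, and the strict gap between $c$ and the values of $J_\lambda$ on $\{\Phi \in \{0, r\}\}$ automatically places the limit $u_\lambda$ in the required open region.
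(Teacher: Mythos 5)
The paper itself offers no proof of this theorem: it is quoted from \cite[Theorem 3.3]{BDO}, and the argument there (going back to \cite{B}) is precisely the Ekeland-plus-local-Palais--Smale scheme you describe, so your route is the standard one rather than a genuinely different one. Your implementation is essentially correct. The bounds $J_\lambda\ge -\lambda S r$ on $M=\{\Phi\le r\}$, $J_\lambda\ge r(1-\lambda S)>0$ on $\{\Phi=r\}$, and $J_\lambda(\bar u)<0$ all follow from $(B_1)$--$(B_3)$ and $\lambda\in I_r$ exactly as you state; the passage from the Ekeland inequality to $\|J_\lambda'(u_n)\|_{X^*}\le 1/n$ is legitimate because the energy gap $c<0<r(1-\lambda S)$ forces $u_n$ into the open set $\{\Phi<r\}$ for large $n$, so two-sided perturbations are admissible; and the resulting sequence falls squarely under the hypotheses of $(PS)^r$ as defined in the paper.

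The one step that does not follow as written is the final deduction $\Phi(u_\lambda)>0$. From $J_\lambda(u_\lambda)=c<0=J_\lambda(0)$ you may only conclude $u_\lambda\neq 0$. Hypothesis $(B_1)$ says that $0$ is \emph{a} global minimizer of $\Phi$, not that $\Phi^{-1}(0)=\{0\}$; a priori your minimizer could satisfy $\Phi(u_\lambda)=0$ with $u_\lambda\neq 0$, in which case the infimum of $J_\lambda$ over $\{0<\Phi(u)<r\}$ need not even be attained there. To close the argument you should either add the observation that $0$ is the unique zero of $\Phi$ --- which is automatic in every application made in this paper, where $\Phi(u)=\|u\|^2/2$, and is implicitly assumed in the rephrased statement --- or weaken the conclusion to minimality over $\{\Phi(u)<r\}$. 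With that single caveat addressed, your proof is complete and coincides in substance with the argument behind the cited result.
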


\noindent
Set for all $\mu>0$, $t\in\R$
\[f(t) = \mu g(t)+(t^+)^{2^*-1}, \ F(t) = \int_0^t f(\tau)\,d\tau,\]
then define $\Phi,\Psi\in C^1(\h)$ as in Section \ref{sec2}. Further, for all $\lambda>0$ set $J_\lambda=\Phi-\lambda\Psi$. Set for all $r,\mu >0$ 
\beq \label{Min}
\lambda_r^* = \min \Big\{\Big[\frac{2^{\frac{2_s^*}{2}}T(N,s)^{2_s^*} r^{\frac{2_s^*-2}{2}}}{2_s^*} + \mu \frac{2^{\frac{p}{2}} a_p T(N,s)^p |\Omega|^{\frac{2_s^*-p}{2_s^*}} 
r^{\frac{p-2}{2}}}{p}\Big]^{-1}, \frac{1}{T(N,s)^{2_s^*}} \Big[\frac{s}{2Nr}\Big]^{\frac{2s}{N-2s}}\Big\}
\eeq
We prove now that $J_\lambda$ satisfies $(PS)^r$ for all $r>0$ and all $\lambda>0$ small enough:

\begin{lemma}\label{PS}
Let $r, \mu >0, \, \lambda_r^*>0$ be defined by \eqref{Min}. Then $J_{\lambda}$ satisfies $(PS)^r$ for all $\lambda \in (0,\lambda_r^*)$.
\end{lemma}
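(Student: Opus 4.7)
The plan is to verify the local Palais--Smale condition via a Brezis--Lieb decomposition, isolating the critical-growth concentration and ruling it out through a Sobolev-based dichotomy that is active under $\lambda<\lambda_r^*$.

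Let $(u_n)\subset\h$ satisfy $(J_\lambda(u_n))$ bounded in $\R$, $J_\lambda'(u_n)\to 0$ in $H^{-s}(\Omega)$, and $\Phi(u_n)\le r$ for every $n\in\N$. The last condition gives $\|u_n\|^2\le 2r$, so the sequence is bounded in $\h$. Extracting a subsequence, one has $u_n\rightharpoonup u$ weakly in $\h$, $u_n\to u$ strongly in $L^\nu(\Omega)$ for every $\nu\in[1,2^*_s)$ by the compact embedding, and $u_n(x)\to u(x)$ a.e.\ in $\Omega$. A standard limiting argument in $\langle J_\lambda'(u_n),\varphi\rangle$---using hypothesis ${\bf H}_2$ \ref{h22} and compactness for the subcritical term, and a.e.\ convergence together with $L^{2^*_s/(2^*_s-1)}$-boundedness for the critical term---shows that $u$ is a critical point of $J_\lambda$, so
\[ \|u\|^2 \;=\; \lambda\mu\int_\Omega g(u)u\,dx + \lambda\int_\Omega (u^+)^{2^*_s}\,dx. \]

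Setting $v_n:=u_n-u$, weak convergence in $\h$ gives $\|u_n\|^2=\|u\|^2+\|v_n\|^2+o(1)$, and the Brezis--Lieb lemma applied to $(u_n^+)$ in $L^{2^*_s}(\Omega)$ yields
\[ \int_\Omega (u_n^+)^{2^*_s}\,dx \;=\; \int_\Omega (u^+)^{2^*_s}\,dx + \int_\Omega |u_n^+-u^+|^{2^*_s}\,dx + o(1). \]
Testing $J_\lambda'(u_n)$ with $u_n$, subtracting the critical-point identity for $u$ and observing that the subcritical contribution vanishes by strong $L^p$ convergence, one arrives at $\|v_n\|^2=\lambda\int_\Omega|u_n^+-u^+|^{2^*_s}\,dx+o(1)$.

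From the pointwise bound $|u_n^+-u^+|\le|v_n|$ and Lemma \ref{Opt}, $\int_\Omega |u_n^+-u^+|^{2^*_s}\,dx\le T(N,s)^{2^*_s}\|v_n\|^{2^*_s}$. Setting $\ell:=\lim\|v_n\|^2$ along a further subsequence, it follows that $\ell\le\lambda T(N,s)^{2^*_s}\ell^{2^*_s/2}$, producing the usual dichotomy: either $\ell=0$---in which case $u_n\to u$ strongly in $\h$, as required---or
\[ \ell\ge \bigl[\lambda T(N,s)^{2^*_s}\bigr]^{-(N-2s)/(2s)}. \]
On the other hand $\|u_n\|^2\le 2r$ and weak lower semicontinuity give $\ell\le 2r-\|u\|^2\le 2r$. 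A short arithmetic check shows that $\lambda<\lambda_r^*$---in particular $\lambda$ is below the second entry of the minimum in \eqref{Min}---forces $[\lambda T(N,s)^{2^*_s}]^{-(N-2s)/(2s)}>2Nr/s>2r$, excluding the nontrivial alternative. Hence $\ell=0$ and $u_n\to u$ in $\h$.

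The chief technical point is the bookkeeping of constants that aligns the Sobolev-derived lower bound on $\ell$ with the explicit threshold in \eqref{Min}: the second entry of the minimum is calibrated precisely so as to keep the a priori upper bound on $\ell$ strictly below its concentration lower bound, closing the argument. The first term of the minimum plays no role in the $(PS)^r$ proof itself; it will enter later through the hypotheses of Theorem \ref{Ab1}.
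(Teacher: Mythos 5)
Your proof is correct, but it reaches the contradiction by a genuinely shorter route than the paper. Both arguments share the skeleton: boundedness from $\Phi(u_n)\le r$, extraction of a weak limit $u$ with $J_\lambda'(u)=0$, the Brezis--Lieb splitting of the critical term, the identity $\|v_n\|^2-\lambda\|v_n^{+}\|_{2^*_s}^{2^*_s}\to 0$ obtained by testing with $u_n$, and the Sobolev dichotomy $\ell\ge\bigl[\lambda T(N,s)^{2^*_s}\bigr]^{-\frac{N-2s}{2s}}$ in the concentration case. Where you diverge is in the upper bound on $\ell$. The paper first proves $J_\lambda(u)>-r$ --- and this is exactly where the \emph{first} entry of the minimum in \eqref{Min} is used --- and combines it with $J_\lambda(u_n)\le\Phi(u_n)\le r$ and a second Brezis--Lieb identity for the energies to get $\tfrac{s}{N}\beta^2=c-J_\lambda(u)<2r$, i.e.\ $\beta^2<2Nr/s$. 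You instead read off $\ell\le\lim(\|u_n\|^2-\|u\|^2)\le 2r$ directly from the norm splitting and the constraint $\Phi(u_n)\le r$, with no energy-level bookkeeping at all; since $2r<2Nr/s$ (as $N>2s$), your bound is strictly sharper and still contradicts the second entry of \eqref{Min}. So your closing remark is accurate for your argument but not for the paper's: there the first entry does enter the $(PS)^r$ proof (through $J_\lambda(u)>-r$), whereas your version genuinely dispenses with it, and would in fact tolerate the slightly larger threshold $T(N,s)^{-2^*_s}(2r)^{-\frac{2s}{N-2s}}$ in place of the second entry. The only cosmetic slip is the appeal to ``weak lower semicontinuity'' for $\ell\le 2r$; what you actually use is $\|v_n\|^2=\|u_n\|^2-\|u\|^2+o(1)$ together with $\|u\|^2\ge 0$.
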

\begin{proof}
Let $(u_n)$ be a sequence in $\h$ s.t.\ $(J_{\lambda}(u_n))$ is bounded, $J_{\lambda}'(u_n) \to 0$ in $H^{-s}(\Omega)$, and $\Phi(u_n)\le r$ for all $n \in \N$.
Then $(u_n)$ is bounded in $\h$, hence in $L^{2_s^*}(\Omega)$ (Lemma \ref{Opt}). Passing to a subsequence we have $u_n \rightharpoonup u$ in $\h$, 
$L^{2_s^*}(\Omega)$, $u_n \to u$ in $L^p(\Omega)$, $u_n(x) \to u(x)$ for a.e. $x \in \Omega$, and $J_{\lambda}(u_n) \to c$.
\vskip2pt
\noindent
First we see that
\beq \label{C1}
J_{\lambda}'(u) = 0.
\eeq
Indeed, since $(u_n^{2_s^*-1})$ is bounded in $L^{(2_s^*)'}(\Omega)$, up to a further subsequence we have $u_n^{2_s^*-1} \rightharpoonup u^{2_s^*-1}$ in
$L^{(2_s^*)'}(\Omega)$, while by ${\bf H}_2$ \ref{h21} \ref{h22} we have $g(u_n) \to g(u)$ in $L^{p'}(\Omega)$. So, for all $\varphi \in \h$ we have
\begin{align*}
\langle J_{\lambda}'(u_n), \varphi \rangle &= \langle u_n, \varphi \rangle - \lambda \int_{\Omega} u_n^{2_s^*-1} \varphi\,dx - \lambda \mu \int_{\Omega} g(u_n)\varphi\,dx\\
& \to \langle u, \varphi \rangle - \lambda \int_{\Omega} u^{2_s^*-1} \varphi\,dx - \lambda \mu \int_{\Omega} g(u)\varphi\,dx= \langle J_{\lambda}'(u), \varphi \rangle,
\end{align*}
which along with $J_{\lambda}'(u_n) \to 0$ gives \eqref{C1}. Besides,
\beq \label{C2}
J_{\lambda}(u) > -r.
\eeq
Indeed, since $u_n \rightharpoonup u$ in $\h$ and $\Phi$ is convex, we have $\Phi(u) \le r$, i.e., $\|u\| \le (2r)^{\frac{1}{2}}$. So using Lemma \ref{Opt}, \eqref{Ho} with $\nu=p$, \eqref{Min}, and 
$\lambda < \lambda_r^*$, we have
\begin{align*}
J_{\lambda}(u) &\ge -\lambda \Psi(u) \\
&\ge - \frac{\lambda}{2_s^*} \|u\|_{2_s^*}^{2_s^*} - \frac{\lambda \mu a_p}{p} \|u\|_{p}^{p} \\
&\ge - \frac{\lambda}{2_s^*} T(N,s)^{2_s^*} (2r)^{\frac{2_s^*}{2}} - \frac{\lambda \mu a_p}{p} T(N,s)^p |\Omega|^{\frac{2_s^*-p}{2_s^*}} (2r)^{\frac{p}{2}} \\
&\ge -\lambda r \Big[\frac{2^{\frac{2_s^*}{2}}T(N,s)^{2_s^*} r^{\frac{2_s^*-2}{2}}}{2_s^*} + \mu \frac{2^{\frac{p}{2}}a_pT(N,s)^p |\Omega|^{\frac{2_s^*-p}{2_s^*}} r^{\frac{p-2}{2}}}{p}\Big] \\ 
&\ge - \frac{\lambda r}{\lambda_r^*},
\end{align*}
and the latter gives \eqref{C2} since $\lambda>\lambda^*_r$. Now set $v_n=u_n-u$. We have
\beq \label{C3}
\lim_n \Big[\Phi(v_n) - \frac{\lambda}{2_s^*} \|v_n\|_{2_s^*}^{2_s^*} \Big] = c - J_{\lambda}(u).
\eeq
Indeed, since $v_n \rightharpoonup 0$ in $\h$, we have
\[
\|v_n\|^2 = \|u_n\|^2 - 2 \langle u_n, u \rangle + \|u\|^2 = \|u_n\|^2 - \|u\|^2 + o(1)
\]
(as $n\to\infty$). Since $v_n \rightharpoonup 0$ in $L^{2_s^*}(\Omega)$, by the Brezis-Lieb Lemma \cite[Theorem 1]{BL} we have 
\[\|v_n\|_{2_s^*}^{2_s^*}= \|u_n\|_{2_s^*}^{2_s^*} - \|u\|_{2_s^*}^{2_s^*} + o(1).\]
Since $u_n \to u$ in $L^p(\Omega)$, we have $G(u_n) \to G(u)$ in $L^1(\Omega)$. So,
\begin{align*}
\Phi(v_n) - \frac{\lambda}{2_s^*} \|v_n\|_{2_s^*}^{2_s^*} &= [\Phi(u_n) -\Phi(u)] - \frac{\lambda}{2_s^*} \big[\|u_n\|_{2_s^*}^{2_s^*} - \|u\|_{2_s^*}^{2_s^*}\big]
- \lambda \mu  \int_{\Omega} [G(u_n) - G(u)]\,dx + o(1)\\
&=J_{\lambda}(u_n) - J_{\lambda}(u) + o(1) \to c - J_{\lambda}(u). 
\end{align*}
On the other hand,
\beq \label{C4}
\lim_n \big[\|v_n\|^2 - \lambda \|v_n\|_{2_s^*}^{2_s^*} \big] = 0.
\eeq
Indeed, arguing as above and recalling that $g(u_n) u_n \to g(u)u$ in $L^1(\Omega)$, we have
\begin{align*}
\|v_n\|^2 - \lambda \|v_n\|_{2_s^*}^{2_s^*} &= [\|u_n\|^2 -\|u\|^2] - \lambda\big[\|u_n\|_{2_s^*}^{2_s^*} - \|u\|_{2_s^*}^{2_s^*}\big]
- \lambda \mu  \int_{\Omega} [g(u_n)u_n - g(u)u]\,dx + o(1)\\
&=\langle J_{\lambda}'(u_n), u_n \rangle - \langle J_{\lambda}'(u), u \rangle + o(1),
\end{align*}
and the latter tends to $0$ as $n \to \infty$, by $J_{\lambda}'(u_n)\to 0$, boundedness of $(u_n)$, and \eqref{C1}.
Recalling that $(v_n)$ is bounded in $\h$, up to a subsequence we have $\|v_n\| \to \beta \ge 0$. We prove that
\beq \label{C5}
\beta =0,
\eeq
arguing by contradiction. Assume $\beta >0$. Then, by \eqref{C4} we have
\[
\beta^2 = \lim_n \lambda \|v_n\|_{2_s^*}^{2_s^*} \le \lambda T(N,s)^{2_s^*} \beta^{2_s^*},
\]
hence 
\[
\beta \ge \Big[\frac{1}{\lambda T(N,s)^{2_s^*}}\Big]^{\frac{1}{2_s^*-2}}.
\]
By \eqref{C2} and \eqref{C3}, we also have
\[
\Big(\frac{1}{2} - \frac{1}{2_s^*} \Big) \beta^2 = c - J_{\lambda}(u) < 2r,
\]
hence
\[
\beta < \Big[\frac{2Nr}{s}\Big]^{\frac{1}{2}}.
\]
Comparing the last inequalities and recalling \eqref{Min}, we get
\[
\lambda > \frac{1}{T(N,s)^{2_s^*}} \Big[\frac{s}{2Nr}\Big]^{\frac{2s}{N-2s}} \ge \lambda_r^*,\]
a contradiction. So \eqref{C5} is proved, which means $u_n \to u$ in $\h$. Thus, $J_\lambda$ satisfies $(PS)^r$.
\end{proof}

\noindent
Set 
\beq \label{mS}
\mu^* = \min \Big\{\Big[\frac{2_s^*}{2^{\frac{2_s^* + 2}{2}}T(N,s)^{2_s^*}}\Big]^{\frac{2}{2_s^*-2}}, \frac{s}{3NT(N,s)^{\frac{N}{s}}}\Big\}^{\frac{2-p}{2}} 
\frac{p}{2^{\frac{p+2}{2}} a_p T(N,s)^p |\Omega|^{\frac{2_s^*-p}{2_s^*}}} >0.
\eeq
We have the following existence result for problem \eqref{Cr}:

\begin{theorem} \label{Crit}
Let ${\bf H}_2$ hold, $\mu^*>0$ be defined by \eqref{mS}. Then, for all $\mu \in (0, \mu^*)$, \eqref{Cr} has at least one solution
$u_{\mu} \in \mathrm{int}(C_s^0(\overline{\Omega})_+)$. 
\end{theorem}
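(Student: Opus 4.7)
The plan is to apply the local minimum theorem (Theorem \ref{Ab1}) to the functional $J_1 = \Phi - \Psi$ on $X = \h$, where $\Phi,\Psi$ are as in Section \ref{sec2} with reaction $f(t) = \mu g(t) + (t^+)^{2_s^* - 1}$, so that critical points of $J_1$ are weak solutions of \eqref{Cr}. Condition \ref{b1} is immediate. The task reduces to choosing $r > 0$ and $\bar{u} \in \h$ that simultaneously satisfy \ref{b2}, \ref{b3}, and the hypothesis $\lambda = 1 < \lambda_r^*$ of Lemma \ref{PS} that guarantees $(PS)^r$.

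Guided by \eqref{mS}, I would set
\[
r = \min\Big\{\Big[\frac{2_s^*}{2^{(2_s^*+2)/2}T(N,s)^{2_s^*}}\Big]^{2/(2_s^*-2)},\ \frac{s}{3NT(N,s)^{N/s}}\Big\}.
\]
By Lemma \ref{Opt}, inequality \eqref{Ho} applied with $\nu = 2_s^*$ and $\nu = p$, and ${\bf H}_2$ \ref{h22}, every $u \in \h$ with $\Phi(u) \le r$ satisfies
\[
\frac{\Psi(u)}{r} \le \frac{2^{2_s^*/2}T(N,s)^{2_s^*}r^{(2_s^*-2)/2}}{2_s^*} + \mu\,\frac{2^{p/2}a_pT(N,s)^p|\Omega|^{(2_s^*-p)/2_s^*}r^{(p-2)/2}}{p}.
\]
The first term is $\le 1/2$ by the first entry in the min defining $r$, and the definition \eqref{mS} of $\mu^*$ is calibrated so that for $\mu < \mu^*$ the second term is $< 1/2$. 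Hence $\sup_{\Phi(u) \le r}\Psi(u)/r < 1$. Together with $r \le \frac{s}{3NT(N,s)^{N/s}} < \frac{s}{2NT(N,s)^{N/s}}$ this shows $1 < \lambda_r^*$ (see \eqref{Min}), so Lemma \ref{PS} yields $(PS)^r$ for $J_1$ and pushes the right endpoint of $I_r$ above $1$.

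For $\bar{u}$ I would adapt the construction from the proof of Theorem \ref{Subc}: pick $x_0 \in \R^N$ and $R > 0$ largest so that $B_R(x_0) \subseteq \Omega$ (possible because $\partial\Omega \in C^{1,1}$), let $u_R$ be the torsion solution from \eqref{T}, and set $\bar{u} = \delta u_R$ for $\delta > 0$ to be fixed. By ${\bf H}_2$ \ref{h23}, for any $K > 0$ there exists $\varepsilon_K > 0$ such that $G(t) \ge Kt^2$ on $[0,\varepsilon_K]$. I would choose $K$ so large that $2\mu K\|u_R\|_2^2 > \|u_R\|^2$ (both norms are explicit through Lemma \ref{tor}), and then $\delta > 0$ so small that $\delta\sup_{\Omega}u_R \le \varepsilon_K$ and $\Phi(\bar{u}) = \delta^2\|u_R\|^2/2 < r$. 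This secures \ref{b2} and
\[
\frac{\Psi(\bar{u})}{\Phi(\bar{u})} \ge \frac{2\mu K\|u_R\|_2^2}{\|u_R\|^2} > 1 > \sup_{\Phi(u) \le r}\frac{\Psi(u)}{r},
\]
which gives \ref{b3} and places $1$ inside $I_r$.

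Theorem \ref{Ab1} then produces $u_\mu \in \h$ with $0 < \Phi(u_\mu) < r$ and $J_1(u_\mu) = \min_{0 < \Phi(u) < r}J_1(u)$; since the constraint set is open in $\h$, $u_\mu$ is an unconstrained local minimizer, hence $J_1'(u_\mu) = 0$ and $u_\mu$ is a nonzero weak solution of \eqref{Cr}. Positivity and the regularity $u_\mu \in \mathrm{int}(C_s^0(\overline{\Omega})_+)$ follow from ${\bf H}_2$ \ref{h21} and Proposition \ref{reg}. The main obstacle in this plan is the exact bookkeeping of constants: $r$ must be small enough for \emph{both} entries of the min in \eqref{Min} to exceed $1$ at $\lambda = 1$, while the threshold $\mu^*$ in \eqref{mS} must be calibrated precisely so that the Sobolev/Hölder upper bound on $\sup_{\Phi(u) \le r}\Psi(u)/r$ stays strictly below $1$ for every $\mu \in (0,\mu^*)$; this is a delicate Young-type balancing of the two Sobolev summands that ultimately fixes the explicit form of \eqref{mS}.
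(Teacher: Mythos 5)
Your proposal is correct and follows essentially the same route as the paper: the same choice of $r$, the same Sobolev-embedding estimate showing $\sup_{\Phi(u)\le r}\Psi(u)/r<1$ and $\lambda_r^*>1$ (hence $(PS)^r$ via Lemma \ref{PS} and $1\in I_r$), the same torsion-function test element $\bar u=\delta u_R$ for \ref{b2}--\ref{b3}, and the same conclusion via Theorem \ref{Ab1} and Proposition \ref{reg}. The only (harmless) difference is that you spell out the choice of $K$ and $\delta$ directly in terms of $G$ and the norms of $u_R$, whereas the paper delegates this to the argument of Theorem \ref{Subc}.
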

\begin{proof}
Fix $\mu \in (0, \mu^*)$ and set
\beq \label{R}
r = \min \Big\{\Big[\frac{2_s^*}{2^{\frac{2_s^* + 2}{2}}T(N,s)^{2_s^*}}\Big]^{\frac{2}{2_s^*-2}}, \frac{s}{3 NT(N,s)^{\frac{N}{s}}}\Big\} >0.
\eeq
By \eqref{mS}, \eqref{R} we have 
\[
\frac{2^{\frac{2_s^*}{2}}T(N,s)^{2_s^*} r^{\frac{2_s^*-2}{2}}}{2_s^*} + \mu \frac{2^{\frac{p}{2}} a_p T(N,s)^p |\Omega|^{\frac{2_s^*-p}{2_s^*}} r^{\frac{p-2}{2}}}{p}
\le \frac{1}{2} + \frac{\mu}{2\mu^*} < 1,
\]
as well as
\[
\frac{1}{T(N,s)^{2_s^*}} \Big[\frac{s}{2Nr}\Big]^{\frac{2s}{N-2s}} \ge \frac{1}{T(N,s)^{2^*_s}}\Big[\frac{s}{2N}\frac{3NT(N,s)^\frac{N}{s}}{s}\Big]^\frac{2s}{N-2s} = \Big(\frac{3}{2}\Big)^{\frac{2s}{N-2s}} >1,
\]
hence by \eqref{Min} we have $\lambda_r^* >1$.
\vskip2pt
\noindent
We intend to apply Theorem \ref{Ab1}. First, we see that hypothesis \ref{b1} holds. Then, for all $u \in \h$, $\Phi(u) \le r$ we have by ${\bf H}_2$ \ref{h21} \ref{h22}, Lemma \ref{Opt}, and \eqref{Ho}
\begin{align*}
\frac{\Psi(u)}{r} &\le \frac{\|u\|_{2_s^*}^{2_s^*}}{2_s^* r}  +\mu  \frac{a_p \|u\|_p^p}{pr} \\
&\le \frac{T(N,s)^{2_s^*} (2r)^{\frac{2_s^*}{2}}}{2_s^* r}  +\mu  \frac{a_p T(N,s)^p |\Omega|^{\frac{2_s^*-p}{2_s^*}} (2r)^{\frac{p}{2}}}{pr} \\
&\le \frac{1}{\lambda^*_r}.
\end{align*}
On the other hand, by ${\bf H}_2$ \ref{h23} we have
\[
\lim_{t \to 0^+} \frac{F(t)}{t^2} = \infty.
\]
So, arguing as in the proof of Theorem \ref{Subc}, we can find $\bar{u} \in \h$ s.t.\
\[
0<\Phi(\bar{u})<r, \quad \frac{\Psi(\bar{u})}{\Phi({\bar{u})}} > \frac{1}{\lambda^*_r},
\]
which ensures \ref{b2} and \ref{b3}. Finally, since $\lambda^*_r>1$, by Lemma \ref{PS} the functional $J_1$ satisfies $(PS)^r$.
\vskip2pt
\noindent
Since $1\in I_r$, from Theorem \ref{Ab1} we deduce the existence of a (relabeled) function $u_\mu\in\h$ s.t.\
\[0<\Phi(u_\mu)<r, \ J_1(u_\mu) = \min_{0<\Phi(u_\mu)<r}J_1(u).\]
In particular, we have $J'_1(u_\mu)=0$ in $H^{-s}(\Omega)$. Thus, by Proposition \ref{reg}, $u_\mu\in{\rm int}(C^0_s(\overline\Omega)_+)$ is a solution of \eqref{Cr}.
\end{proof}

\begin{remark}\label{Crit1}
The proof of Theorem \ref{Crit} gives additional information: $u_{\mu}$ is a local minimizer of $J_1$ in $\h$, satisfies the bound $\|u_{\mu}\| < (2r)^{\frac{1}{2}}$, and the mapping 
$\mu \mapsto J_1(u_{\mu})$ is decreasing in $(0,\mu^*)$.
\end{remark}

\section{Two positive solutions under critical growth}\label{sec5}

\noindent
Finally, we turn to problem \eqref{pp} with $q=2^*_s$, namely, the Brezis-Nirenberg problem for the fractional Laplacian:
\beq \label{bn}
\begin{cases}
\fl u = \mu u^{p-1} + u^{2^*_s-1} & \text{in $\Omega$} \\
u > 0 & \text{in $\Omega$} \\
u=0 & \text{in $\Omega^c$,}
\end{cases}
\eeq
with $p\in(1,2)$, $\mu>0$. This is a special case of \eqref{Cr} with $g(t)=(t^+)^{p-1}$, which satisfies ${\bf H}_2$ with $a_p=1$. We know from \cite[Theorem 1.1]{BCSS} that \eqref{bn} has at least two positive solutions for all $\mu>0$ small enough. Our last result yields an explicitly estimate of 'how small' $\mu$ should be, given by \eqref{mS} which in the present case rephrases as
\beq \label{muc}
\mu^* = \min \Big\{\Big[\frac{2_s^*}{2^{\frac{2_s^* + 2}{2}}T(N,s)^{2_s^*}}\Big]^{\frac{2}{2_s^*-2}}, \frac{s}{3NT(N,s)^{\frac{N}{s}}}\Big\}^{\frac{2-p}{2}} \frac{p}{2^{\frac{p+2}{2}}T(N,s)^p |\Omega|^{\frac{2_s^*-p}{2_s^*}}} >0.
\eeq
Indeed, we have the following multiplicity result:

\begin{theorem}\label{mc}
Let $p\in(1,2)$, $\mu^*>0$ be defined by \eqref{muc}. Then, for all $\mu \in (0, \mu^*)$, \eqref{bn} has at least two solutions $u_{\mu}, w_{\mu} \in \mathrm{int}(C_s^0(\overline{\Omega})_+)$, $u_{\mu} < w_{\mu}$ in $\Omega$. 
\end{theorem}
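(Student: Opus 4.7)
The first solution is obtained directly from Theorem \ref{Crit} applied with $g(t)=(t^+)^{p-1}$, $a_p=1$: hypotheses ${\bf H}_2$ hold since $p\in(1,2)\subset(1,2^*_s)$ and $G(t)/t^2=t^{p-2}/p\to\infty$ as $t\to 0^+$, and the threshold $\mu^*$ of \eqref{mS} coincides with \eqref{muc}. Theorem \ref{Crit} then provides $u_\mu\in\mathrm{int}(C^0_s(\overline\Omega)_+)$ solving \eqref{bn}, and Remark \ref{Crit1} guarantees that $u_\mu$ is a local minimizer of $J_1$ in $\h$.

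For the second solution I would adapt the mountain pass strategy of \cite{BDO,BCSS}. Set $f(t)=\mu(t^+)^{p-1}+(t^+)^{2^*_s-1}$ and truncate at $u_\mu$ via $\tilde f(x,t)=f(\max\{t,u_\mu(x)\})$, with primitive $\tilde F(x,t)=\int_0^t\tilde f(x,\tau)\,d\tau$, and consider $\tilde J(u)=\|u\|^2/2-\int_\Omega\tilde F(x,u)\,dx\in C^1(\h)$. Testing $\tilde J'(w)=0$ and the equation for $u_\mu$ against $(u_\mu-w)^+\in\h$, subtracting, and using the pointwise bound $(h(x)-h(y))(h^+(x)-h^+(y))\ge(h^+(x)-h^+(y))^2$ gives $\|(u_\mu-w)^+\|^2\le 0$, so every critical point $w$ of $\tilde J$ satisfies $w\ge u_\mu$; consequently $\tilde f(\cdot,w)=f(w)$ and $w$ actually solves \eqref{bn}. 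A short computation based on the pointwise inequality $(v(x)-v(y))^2\ge(v^+(x)-v^+(y))^2$ yields $\tilde J(u_\mu+v)-\tilde J(u_\mu)\ge J_1(u_\mu+v^+)-J_1(u_\mu)$, so the local minimality of $u_\mu$ for $J_1$ (combined with Proposition \ref{svh}) transfers to $\tilde J$. Finally, for any non-zero $\phi\in C^\infty_c(\Omega)$ with $\phi\ge 0$ one has $\tilde J(u_\mu+t\phi)\to-\infty$ as $t\to\infty$, since the critical term dominates, producing an $e\in\h$ with $\tilde J(e)<\tilde J(u_\mu)$.

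Assuming $u_\mu$ is a strict local minimizer of $\tilde J$ (otherwise a distinct second solution already exists arbitrarily close), the mountain pass geometry is in force with level $c_\mu=\inf_{\gamma\in\Gamma}\max_{t\in[0,1]}\tilde J(\gamma(t))>\tilde J(u_\mu)$, $\Gamma=\{\gamma\in C([0,1],\h):\gamma(0)=u_\mu,\,\gamma(1)=e\}$. The key technical step, and in my view the main obstacle, is the strict level estimate
\[
c_\mu<\tilde J(u_\mu)+\tfrac{s}{N}T(N,s)^{-N/s}.
\]
I would establish it by evaluating $\tilde J$ along the half-line $t\mapsto u_\mu+tU_\eps$, where $U_\eps$ is a suitable truncation of a Talenti extremizer from Lemma \ref{Opt} concentrated at some $x_0\in\Omega$, and letting $\eps\to 0$: the fractional bubble asymptotics of $[U_\eps]_s^2$ and $\|U_\eps\|_{2^*_s}^{2^*_s}$ reproduce the Sobolev optimum $\tfrac{s}{N}T(N,s)^{-N/s}$, while the interaction with $u_\mu>0$ and the subcritical term $\mu u_\mu^{p-1}$ (exploiting $1<p<2$) supply the strictly negative lower-order correction, along the lines of the classical Brezis--Nirenberg computation carried out in the fractional setting by \cite{SV2,BCSS}.

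A compactness analysis in the spirit of Lemma \ref{PS}, using a Brezis--Lieb splitting of the critical term, then shows that $\tilde J$ satisfies $(PS)_c$ for every $c<\tilde J(u_\mu)+\tfrac{s}{N}T(N,s)^{-N/s}$: any non-vanishing concentration defect would otherwise force the level above this threshold. The mountain pass theorem hence produces $w_\mu\in\h$ with $\tilde J'(w_\mu)=0$ and $\tilde J(w_\mu)=c_\mu>\tilde J(u_\mu)$, so $w_\mu\ne u_\mu$; the comparison step gives $w_\mu\ge u_\mu$ in $\Omega$, Proposition \ref{reg} places $w_\mu$ in $\mathrm{int}(C^0_s(\overline\Omega)_+)$, and a Hopf-type strong comparison applied to the non-trivial supersolution $w_\mu-u_\mu$ of a linear fractional equation (obtained via the mean value theorem on $f$) upgrades the inequality to $w_\mu>u_\mu$ in $\Omega$.
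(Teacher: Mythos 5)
Your overall architecture coincides with the paper's (which follows Barrios--Colorado--Servadei--Soria \cite{BCSS}): first solution as a local minimizer from Theorem \ref{Crit} and Remark \ref{Crit1}, then an auxiliary functional anchored at $u_\mu$, transfer of local minimality via Proposition \ref{svh} together with the inequality $\langle v^+,v^-\rangle\le 0$, a Brezis--Nirenberg level estimate with truncated bubbles, and the mountain pass theorem. The paper translates ($\tilde f(x,t)=f(u_\mu(x)+t^+)-f(u_\mu(x))$, so the minimizer sits at $0$) where you truncate ($\tilde f(x,t)=f(\max\{t,u_\mu(x)\})$); that difference is immaterial.

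The genuine gap is in the compactness step. You assert that $\tilde J$ satisfies $(PS)_c$ for every $c<\tilde J(u_\mu)+\frac{s}{N}T(N,s)^{-N/s}$ because ``any non-vanishing concentration defect would force the level above this threshold.'' But the Brezis--Lieb splitting of a $(PS)_c$ sequence yields $c\ge\tilde J(v)+\frac{s}{N}T(N,s)^{-N/s}$, where $v$ is the \emph{weak limit}; $v$ is a critical point of $\tilde J$, but nothing guarantees $\tilde J(v)\ge\tilde J(u_\mu)$, since $u_\mu$ is only a \emph{local} minimizer and there could a priori be critical points of $\tilde J$ at strictly lower energy. The true threshold is $\inf\{\tilde J(v):\tilde J'(v)=0\}+c^*$, which you cannot identify with $\tilde J(u_\mu)+c^*$. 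The paper (via \cite[Lemma 2.10]{BCSS}) closes this by running the entire second half as a contradiction argument: \emph{assume} $u_\mu$ (i.e.\ $0$ for the translated functional) is the only critical point; then the weak limit is forced to equal $u_\mu$, $(PS)_c$ holds below $\tilde J(u_\mu)+c^*$, and the mountain pass theorem produces a second critical point, a contradiction. That same hypothesis also legitimizes your strict/non-strict dichotomy, since extracting nearby critical points from a non-strict local minimum via Ekeland again requires $(PS)$ at level $\tilde J(u_\mu)$. Your argument is repaired simply by adopting this contradiction structure. Two smaller remarks: in the level estimate the paper obtains the strictly negative correction from the convexity of $t\mapsto t^{2^*_s}$, i.e.\ from the term $u_\mu^{2^*_s-2}t^2$ combined with $\|w_\eps\|_2^2\ge C\eps^{2s}-C\eps^{N-2s}$ (which beats the $\eps^{N-2s}$ error only for $N>4s$; the cases $2s<N\le 4s$ need separate treatment as in \cite[Lemma 2.11]{BCSS}), not from the subcritical term, so you should be explicit about which interaction term you expand and about the case distinction on $N$; and the strict inequality $w_\mu>u_\mu$ follows more directly by applying the fractional Hopf lemma to the nontrivial solution $v_\mu$ of \eqref{Cr2}, whose right-hand side is non-negative by monotonicity of $f$, giving $v_\mu\in{\rm int}(C^0_s(\overline\Omega)_+)$.
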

\begin{proof}
Fix $\mu\in(0,\mu^*)$, define $f\in C(\R)$, $\Phi,\Psi\in C^1(\h)$ as in Section \ref{sec4}, and set for brevity $J=J_1=\Phi-\Psi$. From Theorem \ref{Crit} and Remark \ref{Crit1} we know that there exists $u_\mu\in\h\cap {\rm int}(C^0_s(\overline\Omega)_+)$ which solves \eqref{bn} and is a local minimizer of $J$. Set for all $(x,t)\in\Omega\times\R$
\[\tilde{f}(x,t)=f(u_{\mu}(x) + t^+)-f(u_{\mu}(x)),\]
\[\tilde{F}(x,t)=\int_0^t \tilde{f}(x,\tau)\,d\tau=F(u_{\mu}(x) + t^+)-F(u_{\mu}(x))-f(u_{\mu}(x))t^+.\]
For all $v\in\h$ set
\[\tilde\Psi(v) = \int_\Omega \tilde{F}(x,v)\,dx, \quad \tilde{J}(v)=\Phi(v)-\tilde{\Psi}(v).\]
As in Section \ref{sec2}, it is easily seen that $\tilde{J}\in C^1(\h)$ and all its critical points solve the (nonautonomous) auxiliary problem
\beq \label{Cr2}
\begin{cases}
\fl v = \tilde{f}(x,v) & \text{in $\Omega$} \\
v=0 & \text{in $\Omega^c$.}
\end{cases}
\eeq
The functionals $\tilde J$ and $J$ are related to each other by the following inequality for all $v\in\h$:
\beq\label{in}
\tilde J(v) \ge J(u_\mu+v^+)-J(u_\mu)+\frac{\|v^-\|^2}{2}.
\eeq
Indeed, we have $v^\pm\in\h$ and, setting
\[\Omega_+=\big\{x \in \Omega: v(x)>0\big\}, \quad \Omega_-= \Omega \setminus \Omega_+,\]
from $v=v^+-v^-$ we have
\begin{align*}
\|v\|^2 &= \|v^+\|^2 + \|v^-\|^2 - 2 \iint_{\R^N\times\R^N}\frac{(v^+(x)-v^+(y))(v^-(x)-v^-(y))}{|x-y|^{N+2s}}\,dx\,dy \\
&\ge \|v^+\|^2 + \|v^-\|^2,
\end{align*}
as the integrand vanishes everywhere but in $\Omega_+\times\Omega_-$ and in $\Omega_-\times\Omega_+$, where is is negative. So we have
\begin{align*}
\tilde J(v) &= \frac{\|v\|^2}{2} - \int_{\Omega} \tilde{F}(x,v)\,dx \\
&\ge \frac{\|v^+\|^2}{2} + \frac{\|v^-\|^2}{2} - \int_{\Omega} \big[F(u_{\mu}+v^+) - F(u_{\mu})-f(u_{\mu})v^+\big]\,dx \\
&= \frac{\|u_{\mu} + v^+\|^2}{2} - \frac{\|u_{\mu}\|^2}{2} - \langle u_{\mu}, v^+ \rangle +  \frac{\|v^-\|^2}{2} - \int_{\Omega} \big[F(u_{\mu}+v^+) - F(u_{\mu})-f(u_{\mu})v^+\big]\,dx \\
&= J(u_{\mu} + v^+) - J(u_{\mu}) + \frac{\|v^-\|^2}{2}
\end{align*}
(where we used that $u_\mu$ solves \eqref{bn}).
\vskip2pt
\noindent
We claim that $0$ is a local minimizer of $\tilde{J}$. Indeed, by Proposition \ref{svh} there exists $\rho>0$ s.t.\ for all $v\in\h\cap C^0_s(\overline\Omega)$, $\|v\|_{0,s}\le\rho$ we have $J(u_\mu+v)\ge J(u_\mu)$. Then, for any such $v$ we have as well $\|v^+\|_{0,s}\le\rho$, which along with \eqref{in} implies
\[\tilde J(v) \ge J(u_\mu+v^+)-J(u_\mu)+\frac{\|v^-\|^2}{2} \ge 0 = \tilde J(0).\]
So, $0$ is a local minimizer of $\tilde J$ in $C^0_s(\overline\Omega)$ and hence, by Proposition \ref{svh} again, it is such also in $\h$. In particular, $\tilde J'(0)=0$ in $H^{-s}(\Omega)$, i.e., $0$ solves \eqref{Cr2}.
\vskip2pt
\noindent
From now on we closely follow \cite{BCSS}. Arguing by contradiction, assume that $0$ is the {\em only} critical point of $\tilde J$ in $\h$. Under such assumption, by \cite[Lemma 2.10]{BCSS} $\tilde{J}$ satisfies $(PS)_c$ at any level $c<c^*$, where
\beq \label{C}
c^*=\frac{s}{NT(N,s)^{\frac{N}{s}}}.
\eeq
Fix $x_0\in\Omega$, and for all $\varepsilon>0$, define $v_{\varepsilon} \in H^s(\R^N)$ by setting for all $x \in \R^N$
\[v_{\varepsilon}(x)=\frac{\varepsilon^{\frac{N-2s}{2}}}{(\varepsilon^2 + |x-x_0|^2)^{\frac{N-2s}{2}}}.\]
By Lemma \ref{Opt} we have
\beq \label{S}
\|v_{\varepsilon}\|_{2_s^*} = T(N,s) [v_{\varepsilon}]_s.
\eeq
Now fix $r>0$ s.t.\ $\overline{B}_r(x_0) \subset \Omega$, $\eta \in C^{\infty}(\R^N)$ s.t.\ $\eta =1$ in $B_{\frac{r}{2}}(x_0)$, $\eta=0$ in $B_1^c(x_0)$, and $0 \le \eta \le 1$ in $\R^N$, then define $w_{\varepsilon} \in \h$ by setting for all $x\in\R^N$
\[w_{\varepsilon}(x)= \frac{\eta(x) v_{\varepsilon}(x)}{\|\eta v_{\varepsilon}\|_{2_s^*}}.\]
Clearly $\|w_{\varepsilon}\|_{2_s^*}=1$. Besides, we will prove that for all $\varepsilon >0$ small enough
\beq \label{Max}
\max_{\tau\ge 0} \tilde{J}(\tau w_{\varepsilon}) < c^*.
\eeq
Assume $N>4s$. Then, by \cite[Propositions 21, 22]{SV} we find for all $\varepsilon >0$ small enough
\begin{align*}
&\|w_{\varepsilon}\|^2 \le \frac{1}{T(N,s)^2} + C {\varepsilon}^{N-2s}\\
&\|w_{\varepsilon}\|_2^2 \ge C {\varepsilon}^{2s} - C {\varepsilon}^{N-2s}
\end{align*}
 ($C>0$ denotes several constants, independent of $\varepsilon$). By convexity we have for all $x\in\Omega$, $t\ge 0$
\[\tilde{F}(x,t) \ge \frac{t^{2_s^*}}{2_s^*} + \frac{C}{2} u_{\mu}(x)^{2_s^* -2} t^2.\]
Using \eqref{S} and the relations above, we see that for all $\eps>0$ small enough and all $\tau\ge 0$
\begin{align}\label{jh}
\tilde{J}(\tau w_{\varepsilon}) &\le \frac{\tau^2}{2}\|w_\eps\|^2-\frac{\tau^{2^*_s}}{2^*_s}\|w_\eps\|_{2^*_s}^{2^*_s}-\frac{C\tau^2}{2}\int_\Omega u_\mu^{2^*_s-2}w_\eps^2\,dx \\
\nonumber&\le \frac{\tau^2}{2} \Big[ \frac{1}{T(N,s)^2} + C {\varepsilon}^{N-2s}- C' {\varepsilon}^{2s}\Big] - \frac{\tau^{2_s^*}}{2_s^*} =: h_\eps(\tau)
\end{align}
($C,C' >0$ independent of $\varepsilon$). Now we focus on the mapping $h_{\varepsilon} \in C^1(\R_+)$. First we note that
\[\lim_{\tau\to \infty} h_{\varepsilon}(\tau)= - \infty,\]
so there exists $\tau_\eps\ge 0$ s.t.\
\[h_\eps(\tau_\eps) = \max_{\tau\ge 0}h_\eps(\tau).\]
If $\tau_\eps=0$, from \eqref{jh} we immediately deduce \eqref{Max}. So, let $\tau_\eps>0$. Differentiating $h_\eps$, we get
\[\tau_\eps = \Big[ \frac{1}{T(N,s)^2} + C {\varepsilon}^{N-2s}- C' {\varepsilon}^{2s}\Big]^{\frac{1}{2_s^* - 2}},\]
which tends to $T(N,s)^{-\frac{2}{2^*_s-2}}>0$ as $\eps\to 0^+$. So, taking $\eps>0$ small enough, we have $\tau_\eps\ge\tau_0>0$. Set
\[\tilde\tau_\eps = \Big[ \frac{1}{T(N,s)^2} + C {\varepsilon}^{N-2s}\Big]^{\frac{1}{2_s^* - 2}},\]
and note that the mapping
\[\tau\mapsto\frac{\tau^2}{2}\Big[ \frac{1}{T(N,s)^2} + C {\varepsilon}^{N-2s}\Big] - \frac{\tau^{2_s^*}}{2_s^*}\]
is increasing in $[0,\tilde\tau_\eps]$. So we have
\begin{align*}
h_\eps(\tau_\eps) &= \frac{\tau_\eps^2}{2}\Big[ \frac{1}{T(N,s)^2} + C {\varepsilon}^{N-2s}\Big]-\frac{\tau_\eps^{2^*_s}}{2^*_s}-\frac{C'\eps^{2s}\tau_\eps^2}{2} \\
&\le \frac{\tilde\tau_\eps^2}{2}\Big[ \frac{1}{T(N,s)^2} + C {\varepsilon}^{N-2s}\Big]-\frac{\tilde\tau_\eps^{2^*_s}}{2^*_s}-C''\eps^{2s} \\
&= \frac{s}{N}\Big[ \frac{1}{T(N,s)^2} + C {\varepsilon}^{N-2s}\Big]^\frac{N}{2s}-C''\eps^{2s}.
\end{align*}
Since $N-2s>2s$, for all $\eps>0$ small enough we have by \eqref{C}
\[h_\eps(\tau_\eps) < \frac{s}{NT(N,s)^\frac{N}{s}} = c^*.\]
Then, by \eqref{jh} we obtain \eqref{Max}. The cases $2s<N\le 4s$ are treated in similar ways, see \cite[Lemma 2.11]{BCSS}.
\vskip2pt
\noindent
As a byproduct of \eqref{jh} we have that $\tilde J(\tau w_\eps)\to -\infty$ as $\tau\to\infty$, so we can find $\bar\tau>0$ s.t.\
\[\tilde J(\bar\tau w_\eps) < 0.\]
Since $\tilde J$ has a local minimum at $0$ and no other critical point, we can find $\sigma\in(0,\|\bar\tau w_\eps\|)$ s.t.\ $\tilde J(v)>0$ for all $v\in\h$, $\|v\|=\sigma$. That is, $\tilde J$ exhibits a mountain pass geometry around $0$. Set
\[\Gamma=\big\{\gamma \in C([0,1], \h): \gamma(0)=0, \gamma(1)=\bar\tau w_{\varepsilon}\big\}, \quad  c = \inf_{\gamma \in \Gamma} \max_{t \in [0,1]} \tilde{J}(\gamma(t)).\]
Clearly, $\gamma(t)=t\bar\tau w_\eps$ define a path of the family $\Gamma$, so by \eqref{Max} we have
\[c \le \max_{t\in[0,1]}\tilde J(t\bar\tau w_\eps) < c^*.\]
Thus, $\tilde J$ satisfies $(PS)_c$. By the Mountain Pass Theorem, there exists $v\in\h\setminus\{0\}$ s.t.\ $\tilde J'(v)=0$ in $H^{-s}(\Omega)$, a contradiction.
\vskip2pt
\noindent
So we have proved the existence of $v_\mu\in\h\setminus\{0\}$ s.t.\ $\tilde J'(v_\mu)=0$ in $H^{-s}(\Omega)$. Such $v_\mu$ solves \eqref{Cr2}, and by monotonicity of $f$ we have for a.e.\ $x\in\Omega$
\[\tilde f(x,v_\mu(x)) = f(u_\mu(x)+v_\mu^+(x))-f(u_\mu(x)) \ge 0,\]
so by the fractional Hopf lemma (see for instance \cite[Lemma 2.7]{IMS}, as Proposition \ref{reg} here does not apply) we have $v_\mu\in{\rm int}(C^0_s(\overline\Omega)_+)$. Now set
\[w_{\mu} = u_{\mu} + v_{\mu} \in \mathrm{int}(C_s^0(\overline{\Omega})_+).\]
Clearly $w_\mu>u_\mu$ in $\Omega$, and for all $\varphi\in\h$ we have
\begin{align*}
\langle J'(w_{\mu}), \varphi \rangle &= \langle u_{\mu} + v_{\mu}, \varphi \rangle - \int_{\Omega} f(u_{\mu} + v_{\mu}) \varphi\,dx\\
&=\Big[\langle u_{\mu}, \varphi \rangle - \int_{\Omega} f(u_{\mu}) \varphi\,dx \Big] + \Big[\langle v_{\mu}, \varphi \rangle - \int_{\Omega} \tilde{f}(x,v_{\mu}) \varphi\,dx \Big]\\
&=\langle J'(u_{\mu}), \varphi \rangle + \langle \tilde{J}'(v_{\mu}), \varphi \rangle=0,
\end{align*}
so $w_\mu$ solves \eqref{bn}, which concludes the proof.
\end{proof}

\vskip4pt
\noindent
{\bf Acknowledgement.} Both authors are members of GNAMPA (Gruppo Nazionale per l'Analisi Matematica, la Probabilit\`a e le loro Applicazioni) of INdAM (Istituto Nazionale di Alta Matematica 'Francesco Severi') and supported by the research project {\em Integro-differential Equations and nonlocal Problems} funded by Fondazione di Sardegna (2017). A.\ Iannizzotto is also supported by the grant PRIN n.\ 2017AYM8XW: {\em Non-linear Differential Problems via Variational, Topological and Set-valued Methods}. We thank Prof.\ G.\ Bonanno for his precious suggestions.

\end{document}